\theoremstyle{plain}
\newtheorem{theorem}{Theorem}
\newtheorem{lemma}[theorem]{Lemma}
\newtheorem{proposition}[theorem]{Proposition}
\newtheorem{remark}[theorem]{Remark}
\newtheorem{corollary}[theorem]{Corollary}
\theoremstyle{remark}
\newtheorem{assumption}{Assumption}
\newcommand{\E}{{\mathbb E}}
\newcommand{\R}{{\mathbb R}}
\newcommand{\N}{{\mathbb N}}
\newcommand{\dd}{{\rm d}}
\newcommand{\ii}{\ensuremath{\mathrm{i}}}
\newcommand{\e}{\mathrm{e}}
\providecommand{\norm}[1]{\left\lVert#1\right\rVert}
\begin{document}

\title[A splitting scheme for stochastic Schr\"odinger equations]{Analysis of a splitting scheme for a class of nonlinear stochastic Schr\"odinger equations}

\author{Charles-Edouard Br\'ehier}

\author{David Cohen}

\address[Charles-Edouard Br\'ehier]{{Universit\'e de Pau et des Pays de l'Adour, E2S UPPA, CNRS, LMAP},
            {Pau},
            {64013},
            {France}}
\email{charles-edouard.brehier@univ-pau.fr}

\address[David Cohen]{{Mathematical Sciences, Chalmers University of Technology and University of Gothenburg},
            {Gothenburg},
            {41296},
            {Sweden}}
\email{david.cohen@chalmers.se}

\begin{abstract}
We analyze the qualitative properties and the order of convergence of a splitting scheme for a class of
nonlinear stochastic  Schr\"odinger equations driven by additive noise.
The class of nonlinearities of interest includes nonlocal interaction cubic nonlinearities.
We show that the numerical solution is symplectic and preserves the expected mass for all times (trace formula).
On top of that, for the convergence analysis, some exponential moment bounds
for the exact and numerical solutions are proved. This enables us to provide strong orders of convergence
as well as orders of convergence in probability and almost surely.
Finally, extensive numerical experiments
illustrate the performance of the proposed numerical scheme.
\end{abstract}

\maketitle
{\small\noindent
{\bf AMS Classification (2020).} 60-08. 60H15. 60M15. 65C20. 65C30. 65C50. 65J08}

\bigskip\noindent{\bf Keywords.} Stochastic partial differential equations.
Stochastic Schr\"odinger equations.
Splitting integrators.
Strong convergence.
Geometric numerical integration. Trace formulas.

%% main text
\section{Introduction}\label{intro}

Deterministic Schr\"{o}dinger  equations are widely used within physics,
plasma physics or nonlinear optics, see for instance
\cite{MR1696311, agrawal2007nonlinear, MR1087449, MR1273317}.
In certain physical situations it may be appropriate to incorporate
some randomness into the model. One possibility is to add a driving random force
and obtain a stochastic partial differential equation (SPDE) of the form
$$
\ii\frac{\partial u}{\partial t}(x,t)=\Delta u(x,t)+F(x,u)+\xi(x,t),
$$
considered for $x\in\mathbb{T}^d$, the $d$-dimensional torus, with periodic boundary conditions.
The nonlinearity $F$ and the white noise $\xi$ are described in details below.
See Equation~\eqref{sse} for the formulation of this problem as a stochastic evolution equation.
The stochastic nonlinear Schr\"odinger equations are used as nonlocal
models of wave propagation in several physical applications, see for example~\cite{
AStochasticNonlinearSchrodingerEquation,effectonnoise,MR1954077, MR2135313,MR2268663,Theoreticalnumericalaspectsstochasticnonlinear,MR1425880}
and references therein for further details and applications.
% See for example \cite{MR1425880,Theoreticalnumericalaspectsstochasticnonlinear,
% effectonnoise, AStochasticNonlinearSchrodingerEquation, MR2268663}
% and references therein for further details and applications.
The nonlinearities we shall consider encompass
for instance the cases of an external potential $F(x,u)=V(x)u$ and
of a nonlocal interaction cubic nonlinearity $F(x,u)=(V\star |u|^2)u$. The latter nonlocal interaction function is defined
using the convolution of a sufficiently regular interaction kernel $V$ with the density function $|u|^2$.
The stochastic models that we consider have a structure similar to those of the deterministic Schr\"odinger--Poisson equations or Hartree equations, which
are simplified models for fundamental equations in quantum transport where nonlocal interaction terms appear,
see for instance \cite{MR1087449,MR1273317} and references therein. Such nonlinearities are also used in modeling deterministic problems arising in quantum physics, chemistry, materials sciences, and biology, see for instance~\cite{MR3348493,bddl}.
Nonlocal interaction terms for deterministic models
can be obtained by mean-field limits starting from
many body systems, see for instance \cite{MR3475664,MR1890644}.
We are not aware of similar derivation for stochastic models. Regarding the numerical
treatment of nonlocal interaction terms for Schr\"odinger equations with white noise
dispersion, we refer to the recent article \cite{MR4400428}.

It is worth mentioning that the error analysis below requires stringent regularity assumptions on the interaction kernel $V$. These conditions are not satisfied, for instance, for the Schr\"odinger--Poisson case (where $F(x,u)=\pm(\frac{1}{4\pi|x|}\star |u|^2)u$), for the cubic case (where $F(x,u)=\pm|u|^2u$) or the more general power-law case (where $F(x,u)=\pm|u|^{2\sigma}u$ with $\sigma>0$).
Note that some of the techniques presented below may be used to study numerical schemes for such nonlinearities. This is left for possible future works.

Let us now review the literature on temporal discretizations of stochastic Schr\"{o}dinger
equations, first for equations driven by an It\^o noise. In \cite{MR2268663},
a Crank--Nicolson scheme is studied for the stochastic Schr\"odinger equation with
regular coefficients. First order of strong convergence, resp. rate one half is obtained
in the case of additive noise, resp. multiplicative It\^o noise.
In addition, convergence in probability as well as almost-surely are studied for the case
of a power-law nonlinearity. Finally, in the case of smooth and bounded nonlinearity, weak order one is proved in \cite{MR2268663}.
Observe that the numerical scheme from \cite{MR2268663} is implicit.
The references \cite{Numericalsimulationfocusingstochastic, Barton-Smith_et_al-2005-Numerical_Methods_for_Partial_Differential_Equations}
present thorough numerical simulations and numerically study the effect of noise
in the stochastic Schr\"odinger equation with a power-law nonlinearity.
The work \cite{MR3021492} provides a strong convergence analysis of a splitting strategy to
the variational solution of a stochastic Schr\"odinger equation with regular coefficients.
The recent article \cite{MR3771721} proves strong convergence of an exponential integrator for stochastic Schr\"odinger equations
with regular coefficients. In addition, longtime behaviors of the numerical solutions of a linear model is investigated.
The paper \cite{MR3884775} provides a convergence rate of the weak error under noise discretizations of
some Schr\"odinger equations. The work \cite{MR3986278} shows convergence
in probability of a stochastic (implicit) symplectic scheme
for stochastic nonlinear Schr\"odinger equations with
quadratic potential and an additive noise.
The article \cite{MR3605170} proves weak error estimates for a spatial as well as temporal numerical approximation
of the stochastic cubic Schr\"odinger equation with damping and trace-class noise.
The recent work \cite{MR3826675} proves strong rate $1/2$ as well as weak rate $1$ of a splitting scheme when applied to
a damped stochastic cubic Schr\"odinger equation with linear multiplicative trace-class noise and large enough damping term.
The very recent preprint \cite{cui21} provides several convergence results for a structure-preserving splitting strategy when
applied to nonlinear stochastic Schr\"odinger equations with damping term and multiplicative noise.
There is a vast literature on the numerical analysis of stochastic nonlinear Schr\"odinger equations
with Stratonovich noise and power-law nonlinearities. Without being exhaustive,
we mention the work \cite{MR2036364} which proves convergence in probability of the Crank--Nicolson scheme applied
to such equations with a spatially correlated noise. The work \cite{MR3072234} provides
first order of convergence in probability and in the almost-sure
sense for a splitting scheme applied to the cubic case. The article \cite{MR3498984} shows
strong order one of convergence in the local sense of the $\theta$-scheme when applied to the stochastic cubic
Schr\"odinger equation with a trace-class noise. Strong rate of convergence one for a finite difference approximation
of the stochastic cubic Schr\"odinger equation with a colored noise is shown in \cite{MR3670034}.
The recent article \cite{MR3912762} shows optimal strong order of convergence of a splitting Crank--Nicolson scheme when applied
to the spectral Galerkin spatial discretization of the stochastic cubic Schr\"odinger equation with trace-class noise.

In the present work, we shall analyze a splitting strategy for an efficient time
integration of a class of nonlinear stochastic Schr\"odinger equations, see Equation~\eqref{sse}.
In a nutshell, the main idea of splitting integrators is to decompose the vector field of the
original evolution equation in several parts, such that the arising subsystems are exactly (or easily)
integrated. We refer interested readers to \cite{MR2840298,MR3642447,MR2009376} for details
on splitting schemes for ordinary and partial differential equations.
Splitting schemes are also very popular and efficient numerical integrators for
stochastic differential equations: we refer the interested reader to the articles
\cite{MR1860968,MR2608370,MR3362507,MR3511359,MR3570281,bct21,cv21,MR4422320}, the list of references is not exhaustive.
For stochastic partial differential equations, splitting schemes
have been studied for instance in the following works  \cite{m06,MR2646103,MR3021492,MR3119724,MR3617573,MR3607207,MR3736651,MR3912762,
MR3839068,MR4019051,MR4132896,MR4263224,MR4278943,MR4400428,bcg22}. The splitting scheme considered
in this publication is given by equation \eqref{S1}.

Despite the fact that splitting schemes are widely used for an efficient time
integration of deterministic Schr\"odinger-type equations,
see for instance \cite{MR1739100,MR1921908,MR2429878,MR2472388,MR2628827,MR3072234,MR3702412},
we are not aware of a numerical analysis of such integrators approximating mild solutions
of nonlinear stochastic Schr\"odinger equations driven by an additive noise.
In the present work, we intend to fill this gap for a class of nonlinear SPDEs
and the main results of this paper are the following:
\begin{itemize}
\item bounds for the exponential moments of the mass of the exact and numerical solutions (Theorem~\ref{thm-expm});
\item a kind of longtime stability, namely a so-called trace formula for the mass, of the exact and numerical solutions (Proposition~\ref{prop:mass});
\item preservation of symplecticity for the exact and numerical solutions (Proposition~\ref{prop:symp});
\item strong convergence estimates (with order) of the splitting scheme (Theorem~\ref{th:conv}), as well as orders of convergence in probability and almost surely  (Corollary~\ref{cor:pas}).
\end{itemize}
Observe that, since the nonlinearity in the class of stochastic Schr\"odinger equation considered here may not be
globally Lipschitz, we employ the exponential moments estimates mentioned above to obtain strong rates of convergence,
see Propositions~\ref{prop:conv1/2}~and~\ref{prop:conv1}. In these propositions,
we consider moments of the error multiplied by an exponential discounting factor,
and obtain the expected rate of convergence for this quantity.
%To the best of our knowledge, this quantity has not been considered elsewhere in the literature.
This technique is similar to the approach in~\cite{MR4079431}.
Combining those estimates with the above exponential moment bounds (Theorem~\ref{thm-expm}) to
remove the exponential discounting factor, we can then obtain Theorem~\ref{th:conv}. Note finally,
that the choice of a splitting strategy is crucial in obtaining exponential moment bounds for the numerical solution.

Let us mention that in Corollary~\ref{cor:pas} one obtains orders of convergence $\frac12$ and $1$ in the sense of convergence in probability
and almost sure convergence (depending on a regularity parameter) which are expected to be optimal,
whereas in Theorem~\ref{th:conv} the orders of convergence in the strong sense may not be optimal.
This is due to the exponential moment bounds used in the proofs.

We begin the exposition by introducing some notations, present our main assumptions and provide several moment bound estimates
for the exact solution to the considered SPDE.
We then present the splitting scheme and study some geometric properties of the exact and numerical solutions
in Section~\ref{sec-split}.
The main results of this publication are presented in Section~\ref{sec-conv}.
In particular, exponential moments in the $L^2$ norm of the exact and numerical solutions are given,
as well as several convergence results. More involved and technical proofs of results needed for
convergence estimates are provided in Section~\ref{sec-proofs}.
Various numerical experiments illustrating
the main properties of the splitting scheme
when applied to stochastic Schr\"odinger equations
driven by It\^o noise are given in Section~\ref{sec-num}.
The paper ends with an appendix containing proofs of auxiliary results.

We use $C$ to denote a generic constant, independent of the time-step size of the numerical scheme,
which may differ from one place to another.

\section{Setting}\label{notation}

In this work, we consider the following class of stochastic nonlinear Schr\"odinger equations
\begin{equation}
\begin{aligned}
&\ii\, \mathrm{d}u(t)  = \Delta u(t)\, \mathrm{d}t + F(u(t))\,\mathrm{d}t+\,\alpha\mathrm{d}W^Q(t), \\
&u(0) = u_0,
\end{aligned}
\label{sse}
\end{equation}
where the unknown $\bigl(u(t)\bigr)_{t\ge 0}$ is a stochastic process with values in the Hilbert space $L^2=L^2(\mathbb{T}^d)$ of square integrable complex-valued functions defined on the $d$-dimensional torus $\mathbb{T}^d$. Details concerning the regularity and growth properties of the nonlinearity $F$ and the covariance operator $Q$ are provided below. In addition, $\alpha>0$ is a real parameter measuring the size of the noise $W^Q$. The initial condition $u_0\in L^2$ is deterministic, however the
results below can be adapted to random initial conditions, satisfying appropriate integrability conditions, using a standard conditioning argument. The space $L^2$ is equipped with the norm $\norm{\cdot}_{L^2}$, where for all $u,v\in L^2$,
\[
\norm{u}_{L^2}^2=\langle u,u\rangle,\quad \langle u,v\rangle=\int_{\mathbb{T}^d}\bar{u}(x)v(x)\,\dd x.
\]
The Sobolev spaces $H^1=H^1(\mathbb{T}^d)$ and $H^2=H^2(\mathbb{T}^d)$ are Hilbert spaces, and the associated norms are denoted by $\norm{\cdot}_{H^1}$ and $\norm{\cdot}_{H^2}$. The notation $H^0=L^2$ will also be used below. For $\sigma\in\{0,1,2\}$,
let also $\norm{\cdot}_{\mathcal{C}^ \sigma}$ denote the norm in the Banach space
$\mathcal{C}^\sigma=\mathcal{C}^\sigma(\mathbb{T}^d)$ of functions
of class $\mathcal{C}^\sigma$ defined in $\mathbb{T}^d$.

Solutions of~\eqref{sse} are understood in the mild sense:
\begin{equation}\label{mild}
u(t)=S(t)u_0-\ii\int_0^t S(t-s)F(u(s))\,\mathrm{d}s-\ii\alpha\int_0^tS(t-s)\,\mathrm{d}W^Q(s),
\end{equation}
where $S(t)=\e^{-\ii t\Delta}$. Let us state the following result
(see e.g.~\cite[Lemma $3.1$]{Amass-preservingsplittingscheme} and \cite[Appendix  A.$1$]{MR2268663}).
\begin{lemma}\label{lemma1}
The linear operator $-\ii\Delta$ generates a group $\bigl(S(t)\bigr)_{t\in\R}$ of isometries of $L^2$, such that for all $\sigma\in\{0,1,2\}$, all $u\in H^\sigma$, and all $t\ge 0$, one has
\[
\norm{S(t)u}_{H^\sigma}=\norm{u}_{H^\sigma}.
\]
In addition, for $\sigma\in\{1,2\}$, there exists $C_\sigma\in(0,\infty)$ such that for all $u\in H^\sigma$ and all $t\ge 0$,
\[
\norm{\bigl(S(t)-I\bigr)u}_{L^2}\le C_\sigma t^{\frac{\sigma}{2}}\norm{u}_{H^\sigma}.
\]
\end{lemma}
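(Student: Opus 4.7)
The plan is to diagonalize $-\ii\Delta$ in the Fourier basis of the torus, where everything becomes explicit. Writing $u\in L^2$ as $u=\sum_{k\in\Z^d}\hat u(k)e_k$ with $e_k(x)=(2\pi)^{-d/2}e^{\ii k\cdot x}$, one has $-\ii\Delta e_k=\ii|k|^2 e_k$, so the natural candidate for the group is
\[
S(t)u=\sum_{k\in\Z^d}e^{\ii t|k|^2}\hat u(k)\,e_k,\qquad t\in\R.
\]
Since each multiplier $e^{\ii t|k|^2}$ has modulus one, this formula defines a linear bijection of $L^2$ for every $t\in\R$, and a direct computation shows that $(S(t))_{t\in\R}$ is a strongly continuous unitary group on $L^2$ with infinitesimal generator $-\ii\Delta$ having domain $H^2$. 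This would be my first step.

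For the isometry claim, I would use that for $\sigma\in\{0,1,2\}$ the Sobolev norm on $\T^d$ can be taken in the equivalent form $\|u\|_{H^\sigma}^2=\sum_{k\in\Z^d}(1+|k|^2)^\sigma|\hat u(k)|^2$. Because $|\widehat{S(t)u}(k)|=|e^{\ii t|k|^2}||\hat u(k)|=|\hat u(k)|$ for every $k\in\Z^d$, the sum defining $\|S(t)u\|_{H^\sigma}^2$ is unchanged, so $\|S(t)u\|_{H^\sigma}=\|u\|_{H^\sigma}$ follows immediately (with equality, not only equivalence, once the norm is chosen as above).

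For the regularity estimate, I would start from Parseval:
\[
\|(S(t)-I)u\|_{L^2}^2=\sum_{k\in\Z^d}|e^{\ii t|k|^2}-1|^2\,|\hat u(k)|^2,
\]
and apply the elementary interpolation inequality $|e^{\ii\theta}-1|\le 2^{1-\sigma/2}|\theta|^{\sigma/2}$ for $\sigma\in[0,2]$ and $\theta\in\R$, obtained by combining $|e^{\ii\theta}-1|\le 2$ and $|e^{\ii\theta}-1|\le|\theta|$. With $\theta=t|k|^2$ this gives $|e^{\ii t|k|^2}-1|^2\le C\, t^\sigma|k|^{2\sigma}$ and hence
\[
\|(S(t)-I)u\|_{L^2}^2\le C\,t^\sigma\sum_{k\in\Z^d}|k|^{2\sigma}|\hat u(k)|^2\le C\,t^\sigma\|u\|_{H^\sigma}^2,
\]
from which the claimed bound follows by taking square roots.

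There is no real obstacle here: the Fourier representation reduces both statements to bookkeeping on Fourier coefficients, and the only subtlety is choosing the Sobolev norm conveniently (so that the isometry is an exact equality) and selecting the sharp elementary bound on $|e^{\ii\theta}-1|$ to get the correct exponent $\sigma/2$ in the temporal regularity.
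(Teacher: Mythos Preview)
Your argument is correct: the Fourier-multiplier representation $S(t)e_k=e^{\ii t|k|^2}e_k$ immediately yields both the $H^\sigma$-isometry and, via the elementary bound $|e^{\ii\theta}-1|\le 2^{1-\sigma/2}|\theta|^{\sigma/2}$, the temporal regularity estimate with the stated exponent. The paper does not give its own proof of this lemma at all---it simply cites \cite[Lemma~$3.1$]{Amass-preservingsplittingscheme} and \cite[Appendix~$A1$]{MR2268663}---so your self-contained Fourier argument is in fact more than what the paper provides, and it is exactly the standard route those references would take.
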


The Wiener process $W^Q$, with covariance operator $Q$, in the SPDE \eqref{sse} is defined by
\begin{equation*}
W^Q(t)=\sum_{k\in\N}\gamma_k\beta_k(t)e_k,
\end{equation*}
where $\bigl(e_k\bigr)_{k\in\N}$ is a complete orthonormal system of $L^2$, $\bigl(\beta_k\bigr)_{k\in\N}$ is a sequence of independent real-valued standard Wiener processes on a stochastic basis $(\Omega,\mathcal F, \mathbb P, (\mathcal F(t))_{t\ge0})$,
and $\bigl(\gamma_k\bigr)_{k\in\N}$ is a sequence of complex numbers such that $\displaystyle \sum_{k\in\N}|\gamma_k|^2<\infty$. The linear operators $Q$ and $Q^{\frac12}$ are defined by $Qe_k=\gamma_k^2e_k$ and $Q^{\frac12}e_k=\gamma_ke_k$, for all $k\in\N$.

For a linear operator $\Psi$ from $H^\sigma$ to $H^\sigma$, and any complete orthonormal system
$\left(\varepsilon_k\right)_{k\in\N}$ of $H^\sigma$, we define
\[
\norm{\Psi}_{\mathcal{L}_2^\sigma}^2=\sum_{k\in\N}\norm{\Psi \varepsilon_k}_{H^\sigma}^2.
\]
This definition is independent of the choice of the orthonormal system.

With this notation, $\norm{Q^{\frac12}}_{\mathcal{L}_2^\sigma}^2=\displaystyle\sum_{k\in\N}|\gamma_k|^2\norm{e_k}_{H^\sigma}^2$
(whenever the sum is finite).

We now set the assumptions on the spatial Sobolev regularity of the noise as well as on the nonlinearity in the
stochastic Schr\"odinger equation \eqref{sse}
required to prove well-posedness for the SPDE~\eqref{sse}, to prove $H^1$-regularity of the solution,
and to show strong convergence of order $1/2$ of the proposed splitting integrator in Section~\ref{sec-conv}.
\begin{assumption}\label{as1}
One has
$$
\norm{Q^{\frac12}}_{\mathcal{L}_2^1}^2=\sum_{k\in\N}|\gamma_k|^2\norm{e_k}_{H^1}^2<\infty.
$$
The nonlinearity $F$ satisfies $F(u)=V[u]u$ for all $u\in L^2$, where $V\colon u\in L^2\mapsto V[u]\in\R$ is a real-valued mapping.
Furthermore, it is assumed that $V[u_1]=V[u_2]$ if $|u_1|=|u_2|$ (i.\,e. the potential $V$ is a function of the modulus).

In addition to the above, assume that the mapping $F$ is locally Lipschitz continuous with at most cubic growth: there exists $C_F\in(0,\infty)$ and $K_F\in(0,\infty)$ such that for all $u_1,u_2\in L^2$, one has
\begin{equation}\label{eq:KF}
\norm{F(u_2)-F(u_1)}_{L^2}\le \left(C_F+K_F(\norm{u_1}_{L^2}^2+\norm{u_2}_{L^2}^2)\right)\norm{u_2-u_1}_{L^2}.
\end{equation}
Finally, there exists $C_1\in(0,\infty)$ and a polynomial mapping $P_1$, such that for all $u\in H^1$, one has
\begin{equation}\label{eq:as1-2}
\begin{aligned}
\norm{F(u)}_{H^1}&\le C_1\norm{u}_{H^1}\left(1+\norm{u}_{L^2}^2\right)\\
|{\rm Im}(\langle \nabla u,\nabla F(u)\rangle)|&\le C_1\norm{\nabla u}_{L^2}^2
+P_1\left(\norm{u}_{L^2}^2\right).
\end{aligned}
\end{equation}
\end{assumption}
In the above and in the sequel, $\nabla u$ denotes the gradient of the mapping $u$.

Note that assuming that $V[u]$ is real-valued for all $u\in L^2$ implies that one has the equality ${\rm Im}(\langle u,F(u)\rangle)=0$.

The value of the real number $K_F$ appearing in the right-hand side of~\eqref{eq:KF} plays a crucial role in the convergence analysis below.

Let us recall the definition of the stochastic integral in the mild form \eqref{mild} and the associated It\^o isometry property. If for all $t\ge 0$, $\Psi(t)$ is a linear operator from $H^\sigma$ to $H^\sigma$, the stochastic integral $\displaystyle\int_0^T\Psi(t)\,\dd W^Q(t)$ is understood as
$\displaystyle\sum_{k\in\N}\gamma_k\int_0^T \Psi(t)e_k \,\dd \beta_k(t)$, and the It\^o isometry formula is given by
\[
\E\left[\norm{\int_{0}^{T}\Psi(t)\,\dd W^Q(t)}_{H^\sigma}^2\right]={\sum_{k\in\N}\int_0^T |\gamma_k|^2\norm{\Psi(t)e_k}_{H^\sigma}^2\,\dd t=}\int_0^T \norm{\Psi(t)Q^{\frac12}}_{\mathcal{L}_2^\sigma}^2\,\dd t.
\]
Under Assumption~\ref{as1}, the stochastic convolution $\displaystyle-\ii\int_{0}^{t}S(t-s)\,\dd W^Q(s)$
is thus well-defined and takes values in $H^1$. It solves the linear stochastic Schr\"odinger equation driven by additive noise
\[
\ii \, \mathrm{d}u(t)=\Delta u(t)\, \mathrm{d}t+\, \mathrm{d}W^Q(t),\quad u(0)=0.
\]

Most of the analysis can be performed when Assumption~\ref{as1} is satisfied, in particular we will prove
below that~\eqref{sse} admits a unique global solution, and that the splitting scheme has a convergence order $1/2$. To get convergence order $1$ of the proposed splitting integrator for the semilinear problem~\eqref{sse}, we need further assumptions.
\begin{assumption}\label{as2}
On top of Assumption~\eqref{as1}, let us assume that one has
$$
\norm{Q^{\frac12}}_{\mathcal{L}_2^2}^2=\sum_{k\in\N}|\gamma_k|^2\norm{e_k}_{H^2}^2<\infty.
$$
Furthermore, let us assume that the nonlinearity $F$ is twice differentiable, and there exists $C\in(0,\infty)$ such that for all $u,h,k\in L^2$, one has
\begin{equation}\label{eq:as2-1}
\begin{aligned}
\norm{F'(u).h}_{L^2}&\le C(1+\norm{u}_{L^2}^2)\norm{h}_{L^2}\\
\norm{F''(u).(h,k)}_{L^2}&\le C(1+\norm{u}_{L^2})\norm{h}_{L^2}\norm{k}_{L^2}.
\end{aligned}
\end{equation}
Finally, let us assume that there exists $C_2\in(0,\infty)$ and a polynomial mapping $P_2$, such that for all $u\in H^2$, one has
\begin{equation}\label{eq:as2-2}
\begin{aligned}
\norm{F(u)}_{H^2}&\le C_\sigma\norm{u}_{H^2}\left(1+\norm{u}_{L^2}^2\right)\\
|{\rm Im}(\langle \nabla^2 u,\nabla^2 F(u)\rangle)|&\le C_2\norm{\nabla^2 u}_{L^2}^2
+P_2\left(\norm{u}_{L^2}^2,\norm{\nabla u}_{L^2}^2\right).
\end{aligned}
\end{equation}
\end{assumption}
In the above and in the sequel, $\nabla^2 u$ denotes the Hessian matrix of the mapping $u$.

Next, we verify that the two examples of nonlinearities seen in the introduction, namely $F(u)=Vu$ and $F(u)=\left(V\star |u|^2\right)u$, verify these conditions.
First, the conditions in Assumption~\ref{as1}~or~\ref{as2} are satisfied in the case of a linear mapping $F(u)=Vu$,
where the external potential function $V\colon\mathbb{T}^d\to\R$
is a real-valued mapping of class $\mathcal{C}^\sigma$, with $\sigma=1$ (resp. $\sigma=2$) to satisfy Assumption~\ref{as1} (resp. Assumption~\ref{as2}). In that case, the mapping $F$ is globally Lipschitz continuous, and~\eqref{eq:KF} holds with $C_F=\norm{V}_{\mathcal{C}^0}$ and $K_F=0$.
Second, the conditions in Assumption~\ref{as1}~or~\ref{as2} also hold for the following class of nonlocal interaction cubic nonlinearities.  Note that $K_F\neq 0$ in this case.
\begin{proposition}\label{propo}
Let $\sigma\in\{1,2\}$ and let $V:\mathbb{T}^d\to\R$ be a real-valued mapping of class $\mathcal{C}^\sigma$. For every $u\in L^2$, set
\begin{equation*}
V[u]=V\star |u|^2=\int V(\cdot-x)|u(x)|^2\,\dd x,
\end{equation*}
where $\star$ denotes the convolution operator.

Then Assumption~\ref{as1} (resp. Assumption~\ref{as2}) is satisfied for the nonlinearity $F(u)=V[u]u=\left(V\star |u|^2\right)u$ when $\sigma=1$ (resp. when $\sigma=2$).
\end{proposition}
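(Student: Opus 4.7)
The plan is to verify each clause of Assumption~\ref{as1} (resp.~Assumption~\ref{as2}) for $F(u)=(V\star |u|^2)u$ by direct calculation, the main workhorses being Young's convolution inequality in the form $\norm{V\star g}_{L^\infty}\le \norm{V}_{L^\infty}\norm{g}_{L^1}$, Cauchy--Schwarz to control $\norm{|u|^2}_{L^1}=\norm{u}_{L^2}^2$, and the Leibniz rule $\nabla^k(V\star|u|^2)=(\nabla^k V)\star|u|^2$. Since $V$ is real-valued on the compact torus and of class $\mathcal{C}^\sigma$, the quantities $\norm{\nabla^j V}_{L^\infty}$ are finite for $j\le\sigma$, and $V[u]$ is manifestly real-valued and depends only on $|u|$. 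This immediately yields the structural hypotheses in Assumption~\ref{as1}.

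For the cubic Lipschitz bound~\eqref{eq:KF}, I would write
\[
F(u_2)-F(u_1)=V[u_2](u_2-u_1)+\bigl(V\star(|u_2|^2-|u_1|^2)\bigr)u_1,
\]
bound the first factor in $L^\infty$ by $\norm{V}_{L^\infty}\norm{u_2}_{L^2}^2$, and use the identity $|u_2|^2-|u_1|^2=(\overline{u_2-u_1})u_2+\overline{u_1}(u_2-u_1)$ together with Cauchy--Schwarz to produce the factor $(\norm{u_1}_{L^2}+\norm{u_2}_{L^2})\norm{u_2-u_1}_{L^2}$. The resulting estimate has the required form, with $K_F$ proportional to $\norm{V}_{L^\infty}$.

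For the $H^1$ bound in~\eqref{eq:as1-2}, the Leibniz rule gives $\nabla F(u)=(\nabla V\star|u|^2)u+V[u]\nabla u$; applying Young and Cauchy--Schwarz to each term yields $\norm{F(u)}_{H^1}\lesssim\norm{V}_{\mathcal{C}^1}\norm{u}_{L^2}^2\norm{u}_{H^1}$. For the imaginary-part identity, I would observe that $V[u]\in\R$ makes $\langle\nabla u,V[u]\nabla u\rangle$ real, so only the cross term survives; using that $\nabla V\star|u|^2$ is a real-valued vector field and the pointwise inequality $|\mathrm{Im}(\overline{\nabla u}\, u)|\le|\nabla u||u|$, followed by Young's inequality $ab\le \tfrac12 a^2+\tfrac12 b^2$, I obtain the bound $C_1\norm{\nabla u}_{L^2}^2+P_1(\norm{u}_{L^2}^2)$ with $P_1$ of degree $3$ in $\norm{u}_{L^2}^2$.

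The extension to Assumption~\ref{as2} is carried out along the same lines. Computing
\[
F'(u).h=\bigl(V\star 2\mathrm{Re}(\bar{u}h)\bigr)u+V[u]h,\qquad F''(u).(h,k)=\sum_{\text{3 terms}}(\text{permutations of }u,h,k),
\]
gives the bounds in~\eqref{eq:as2-1} immediately via Young and Cauchy--Schwarz. For~\eqref{eq:as2-2}, the Leibniz rule expansion of $\nabla^2 F(u)$ produces a term $(V[u])\nabla^2 u$ whose inner product with $\nabla^2 u$ is purely real (since $V[u]\in\R$) and therefore does not contribute to the imaginary part, while the remaining terms involve $\nabla V\star|u|^2$ or $\nabla^2 V\star|u|^2$ (both real-valued) acting on lower-order derivatives of $u$; the same pointwise inequality on the imaginary part, combined with Young, yields the desired estimate with $P_2$ polynomial in $\norm{u}_{L^2}^2$ and $\norm{\nabla u}_{L^2}^2$. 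The main obstacle I expect is purely bookkeeping: tracking carefully that every factor of $\nabla V$ or $\nabla^2 V$ is absorbed into the $L^\infty$ norm while every factor of $u$ is absorbed either into an $L^2$ norm (via convolution) or into the $H^\sigma$ norm of the highest-order derivative, so that the growth stays within $\norm{u}_{H^\sigma}(1+\norm{u}_{L^2}^2)$ and no term produces an unallowed power of $\norm{\nabla u}_{L^2}$.
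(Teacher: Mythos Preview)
Your proposal is correct and follows essentially the same approach as the paper: the same decomposition $F(u_2)-F(u_1)=V[u_2](u_2-u_1)+(V[u_2]-V[u_1])u_1$ for the Lipschitz bound, the same explicit formulas for $F'(u).h$ and $F''(u).(h,k)$, and the same use of Young's convolution inequality together with the fact that derivatives fall on $V$ via $\nabla^j(V\star|u|^2)=(\nabla^j V)\star|u|^2$. In fact you spell out in more detail the ``straightforward computations'' that the paper leaves to the reader for~\eqref{eq:as1-2} and~\eqref{eq:as2-2}, in particular the observation that the $V[u]\nabla^j u$ term contributes nothing to the imaginary part since $V[u]$ is real.
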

\begin{proof}
Observe that for any $u\in L^2$, the mapping $V[u]$ is of class $\mathcal{C}^\sigma$, with $\nabla^\sigma V[u]=\nabla^\sigma V \star |u|^2$ for $\sigma=1,2$. It thus follows that $\norm{V[u]}_{\mathcal{C}^\sigma}\le \norm{V}_{\mathcal{C}^\sigma}\norm{u}_{L^2}^2$ for all $u\in L^2$.

First, assume that $\sigma=1$. Let us check that~\eqref{eq:KF} holds. Let $u_1,u_2\in L^2$, then one has
\begin{align*}
\norm{F(u_2)-F(u_1)}_{L^2}&\le \norm{V[u_2](u_2-u_1)}_{L^2}+\norm{(V[u_2]-V[u_1])u_1}_{L^2}\\
&\le \norm{V[u_2]}_{\mathcal{C}^0}\norm{u_2-u_1}_{L^2}+\norm{V[u_2]-V[u_1]}_{\mathcal{C}^0}\norm{u_1}_{L^2}\\
&\le \norm{V}_{\mathcal{C}^0}\left(\norm{u_2}_{L^2}^2+\norm{u_1+u_2}_{L^2}\norm{u_1}_{L^2}\right)\norm{u_2-u_1}_{L^2}\\
&\le \frac32 \norm{V}_{\mathcal{C}^0}\left(\norm{u_1}_{L^2}^2+\norm{u_2}_{L^2}^2\right)\norm{u_2-u_1}_{L^2}.
\end{align*}
Thus~\eqref{eq:KF} holds with $C_F=0$ and $K_F=\frac32 \norm{V}_{\mathcal{C}^0}$. The conditions in~\eqref{eq:as1-2} follow from straightforward computations.

Second, assume that $\sigma=2$. The conditions in~\eqref{eq:as2-1} follow from writing, for all $u,h,k\in L^2$,
\begin{align*}
F'(u).h&=V[u]h+2\bigl(V\star {\rm Re}(\bar{u}h)\bigr)u\\
F''(u).(h,k)&=2\bigl(V\star {\rm Re}(\bar{u}h)\bigr)k+2\bigl(V\star {\rm Re}(\bar{k}u)\bigr)h+2\bigl(V\star {\rm Re}(\bar{h}k)\bigr)u.
\end{align*}
The conditions in~\eqref{eq:as2-2} follow from straightforward computations.

This concludes the proof of Proposition~\ref{propo}.
\end{proof}

Note that the conditions in Assumption~\ref{as1}~or~\ref{as2} are not satisfied in the standard cubic nonlinear Schr\"odinger case, where $V[u]=\pm|u|^2$,
or for other (non-trivial) power-law nonlinearities, or for the logarithmic nonlinearity considered in the recent preprint \cite{cui2021structurepreserving},
or for the stochastic Schr\"odinger--Poisson equation with a non-smooth potential $V$.

\begin{remark}
The result of Proposition~\ref{propo} remains valid if $D$ is an arbitrary smooth bounded domain and homogeneous Dirichlet boundary conditions are imposed, instead of periodic boundary conditions,
under appropriate assumptions on the potential $V$.
\end{remark}

To conclude this section, let us state a well-posedness result for the stochastic Schr\"odinger equation \eqref{sse}
in terms of mild solutions~\eqref{mild}, and several moment bound estimates. Note that additional bounds for
the exponential moments in $L^2$ of the exact solution are given in Section~\ref{sec-conv}.
\begin{proposition}\label{prop:exact}
Let Assumption~\ref{as1} be satisfied.

For any initial condition $u_0\in L^2$, there exists a unique mild solution $\bigl(u(t)\bigr)_{t\ge 0}$ of the stochastic
Schr\"odinger equation~\eqref{sse}, which satisfies~\eqref{mild} for all $t\ge 0$. In addition, for every $T\in(0,\infty)$, $\sigma\in\{0,1,2\}$, $u_0\in H^\sigma$, and $p\in[1,\infty)$, there exists $C_p(T,\alpha,Q,u_0)\in(0,\infty)$
such that one has a moment bound in $H^\sigma$
$$
\underset{0\le t\le T}\sup~\E[\norm{u(t)}_{H^\sigma}^{2p}]\le C_p(T,\alpha,Q,u_0),
$$
with $\sigma=1$, resp. $\sigma=2$, when Assumption~\ref{as1}, resp. Assumption~\ref{as2}, is satisfied.

Finally one has the following temporal regularity estimate: for all $t_1,t_2\in[0,T]$,
$$
\E\left[\norm{u(t_2)-u(t_1)}_{L^2}^{2p}\right]\le C_p(T,\alpha,Q,u_0)|t_2-t_1|^p.
$$
\end{proposition}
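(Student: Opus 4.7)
The plan is to follow the standard mild solution strategy adapted to the Schr\"odinger setting. For local existence and uniqueness, I would apply a Banach fixed point argument to the mild formulation~\eqref{mild} in the space $\mathcal{C}([0,\tau];L^2)$. The isometry property of $S(t)$ from Lemma~\ref{lemma1} and the local Lipschitz bound~\eqref{eq:KF} imply that the right-hand side of~\eqref{mild} is a contraction on a closed ball of $\mathcal{C}([0,\tau];L^2)$ provided $\tau$ is small enough depending on the radius of the ball; the stochastic convolution is well defined in $L^2$ (even in $H^1$) thanks to Assumption~\ref{as1} and the It\^o isometry. This produces a unique local mild solution up to a maximal time $\tau^\star(\omega)$.

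To promote local to global existence and simultaneously get the $L^2$ moment bound, I would derive an a priori trace formula for the mass. Since $V[u]$ is real-valued, $\mathrm{Im}\langle u,F(u)\rangle=0$, and applying It\^o's formula to $\norm{u(t)}_{L^2}^2$ (rigorously via a Galerkin approximation, or via the mild form after integration by parts) yields
\[
\norm{u(t)}_{L^2}^2=\norm{u_0}_{L^2}^2+\alpha^2 t\norm{Q^{\frac12}}_{\mathcal{L}_2^0}^2+2\alpha\,\mathrm{Im}\int_0^t\langle u(s),\mathrm{d}W^Q(s)\rangle.
\]
Combining with the Burkholder--Davis--Gundy (BDG) inequality and Young's inequality gives $\E[\sup_{t\in[0,T]}\norm{u(t)}_{L^2}^{2p}]\le C_p(T,\alpha,Q,u_0)$ for every $p\ge 1$, which in particular rules out blow-up in $L^2$. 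Hence $\tau^\star=\infty$ almost surely and the $\sigma=0$ moment bound holds. For the $H^1$ estimate, I would apply It\^o's formula to $\norm{u(t)}_{H^1}^{2}$: the deterministic contribution produces $2\,\mathrm{Im}\langle\nabla u,\nabla F(u)\rangle$, controlled by the second line of~\eqref{eq:as1-2} as $C_1\norm{\nabla u}_{L^2}^2+P_1(\norm{u}_{L^2}^2)$, while the stochastic contributions are handled by $\norm{Q^{\frac12}}_{\mathcal{L}_2^1}^2<\infty$ and BDG. A Gr\"onwall argument combined with the $L^2$ moment bound then yields the $H^1$ moment bound. Under Assumption~\ref{as2}, the same strategy applied to $\norm{\nabla^2 u(t)}_{L^2}^2$, using the second line of~\eqref{eq:as2-2} and the $L^2,H^1$ moment bounds together with $\norm{Q^{\frac12}}_{\mathcal{L}_2^2}^2<\infty$, delivers the $H^2$ moment bound.

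For the temporal regularity, I would write, for $t_1\le t_2$,
\[
u(t_2)-u(t_1)=\bigl(S(t_2-t_1)-I\bigr)u(t_1)-\ii\!\int_{t_1}^{t_2}\!S(t_2-s)F(u(s))\,\mathrm{d}s-\ii\alpha\!\int_{t_1}^{t_2}\!S(t_2-s)\,\mathrm{d}W^Q(s),
\]
and estimate the three terms separately: Lemma~\ref{lemma1} gives the bound $C_1(t_2-t_1)^{1/2}\norm{u(t_1)}_{H^1}$ for the first term, whose $L^{2p}(\Omega)$-norm is finite by the $H^1$ moment bound; for the second term, combining the isometry of $S$, the growth estimate $\norm{F(u)}_{L^2}\le(C_F+K_F\norm{u}_{L^2}^2)\norm{u}_{L^2}+\norm{F(0)}_{L^2}$ from~\eqref{eq:KF}, and the $L^2$ moment bound gives an $\mathcal{O}(t_2-t_1)$ contribution; for the stochastic integral the It\^o isometry and BDG give an $\mathcal{O}((t_2-t_1)^{1/2})$ contribution in $L^{2p}$. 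Raising to the $2p$-th power yields the stated estimate.

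The main obstacle will be the rigorous derivation of the higher-order a priori $H^1$ (and $H^2$) bounds via It\^o's formula, because applying It\^o to $\norm{\cdot}_{H^\sigma}^2$ in infinite dimensions requires a Galerkin truncation and a careful passage to the limit; the existence step itself is delicate mainly because $F$ is only locally Lipschitz with cubic growth, so without the trace formula for the mass one cannot exclude finite-time blow-up.
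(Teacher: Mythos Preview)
Your proposal is correct and follows the same overall strategy as the paper: exploit the trace formula for the mass to rule out $L^2$ blow-up, then apply It\^o's formula together with~\eqref{eq:as1-2} (resp.~\eqref{eq:as2-2}) and Gr\"onwall to obtain the $H^1$ (resp.~$H^2$) moment bounds, and finally use the mild formulation for the temporal regularity. Two minor technical differences are worth noting. First, for global well-posedness the paper does not run a fixed-point argument on~\eqref{mild} directly; instead it truncates the nonlinearity, setting $F^R(u)=\theta(R^{-1}\norm{u}_{L^2})V[u]u$ with $\theta$ a smooth cut-off, so that the truncated problem is globally Lipschitz and has a global solution $u^R$ from the outset. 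The trace formula then yields $R$-independent bounds on $\E[\sup_{t\le T}\norm{u^R(t)}_{L^2}^2]$, from which $\mathbb{P}(\tau^R\le T)\to 0$ and global existence follow. This avoids the stopping-time localisation you would need to make your It\^o argument rigorous on a merely local solution. Second, for the temporal regularity the paper expands $u(t_2)-u(t_1)$ from time $0$ (five terms) rather than from $t_1$ as you do (three terms); your decomposition is slightly more compact but requires the $H^1$ moment bound for $u(t_1)$ at a random time, which is of course available. Both routes give the same estimate.
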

The proof uses standard arguments and is postponed to the appendix.

\section{Splitting scheme}\label{sec-split}
In this section we define a splitting integrator for the stochastic Schr\"odinger equation~\eqref{sse} and show some
geometric properties of this time integrator.
The main idea of splitting schemes is to decompose the original problem, equation~\eqref{sse} in our case,
into subsystems that can be solved exactly (or efficiently numerically).
Splitting schemes are widely used for time discretization of deterministic cubic Schr\"odinger equations,
see, e.g. the key early reference \cite{ht73}.

The definition of the splitting scheme studied in this article relies on the flow associated with
the differential equation $\ii \dot{u}=F(u)=V[u]u$. For all $u\in L^2$ and $t\in\R$, define
\begin{equation}
\Phi_{t}(u)=\e^{-\ii tV[u]}u.
\end{equation}
Since $V[u]$ is real-valued by Assumption~\ref{as1}, one has $|\Phi_{t}(u)|=|u|$ for all $t\ge 0$, which gives $V[\Phi_{t}(u)]=V[u]$
using Assumption~\ref{as1}~or~\ref{as2}. It is then straightforward to check that
$\bigl(\Phi_t\bigr)_{t\in\R}$ is the flow associated with the differential
equation $\ii \dot{u}=F(u)$. Indeed, for all $u\in L^2$ and all $t\in\R$, one has
\[
\ii\frac{\dd}{\dd t}\Phi_{t}(u)=V[u]\Phi_t(u)=F\left(\Phi_t(u)\right).
\]

Observe that the flow of the above ODE preserves the $L^2$-norm: one has
$\norm{\Phi_t(u)}_{L^2}=\norm{u}_{L^2}$ for all $t\ge 0$ and all $u\in L^2$.

The splitting scheme for the stochastic Schr\"odinger equation \eqref{sse} considered in this article is then
defined by the explicit recursion
\begin{equation}\label{S1}
u_{n+1}=S(\tau)\left(\Phi_\tau(u_n)-\ii\alpha\delta W_n^Q\right),
\end{equation}
where $\tau$ denotes the time-step size, and $\delta W_n^Q=W^Q((n+1)\tau)-W^Q(n\tau)$ are Wiener increments.
Recall that $S(\tau)=\e^{-\ii \tau\Delta}$ is defined after equation~\eqref{mild}. Without loss of generality, it is assumed that $\tau\in(0,1)$. The scheme is obtained using a splitting strategy: at each time step, first one may write
$\tilde{u}_n=\Phi_{\tau}(u_n)$, {\it i.\,e.} the equation $\ii \dot{u}=F(u)$ with initial condition $u_n$ is solved exactly,
second one has $u_{n+1}=S(\tau)\tilde{u}_n-\ii\alpha S(\tau)\delta W_n^Q$, which comes from applying
an exponential Euler scheme to the stochastic differential equation $\ii\dd u=\Delta u\,\dd t+\,\alpha\dd W^Q(t)$.
Observe that bounds for the exponential moments in $L^2$ of the numerical solution are given in Section~\ref{sec-conv}.

\begin{remark}
Alternatively, solving exactly the stochastic differential equation $\ii\dd u=\Delta u\,\dd t+\,\alpha\dd W^Q(t)$
yields the following numerical scheme for the SPDE \eqref{sse}
\begin{equation}\label{S2}
u_{n+1}=S(\tau)\Phi_\tau(u)-\ii\alpha\int_{n\tau}^{(n+1)\tau}S((n+1)\tau-t)\,\dd W^Q(t).
\end{equation}
Generalizing the results obtained below to this numerical scheme is straightforward and thus omitted in the sequel.
\end{remark}

The error analysis for the splitting scheme \eqref{S1} presented in the next section will make use of the following additional assumption.
\begin{assumption}\label{as3}
There exists $C\in(0,\infty)$ such that for all $t\in[0,1]$ and $u\in L^2$ one has
\[
\norm{\Phi_t(u)-u}_{L^2}\le C|t|\left(1+\norm{u}_{L^2}^3\right).
\]
\end{assumption}
Note that Assumption~\ref{as3} is satisfied for the two examples of nonlinearities described in Section~\ref{notation}.
Indeed, one obtains $\norm{\Phi_t(u)-u}_{L^2}\le t\norm{V[u]}_{\mathcal{C}^0}\norm{u}_{L^2}$.
For the case of an external potential ($V[u]=V$), one has $\norm{V[u]}_{\mathcal{C}^0}=\norm{V}_{\mathcal{C}^0}$.  For the case of a nonlocal interaction ($V[u]=V\star|u|^2$), one has $\norm{V[u]}_{\mathcal{C}^0}\le \norm{V}_{\mathcal{C}^0}\norm{u}_{L^2}^2$.

We now present some geometric properties of the splitting scheme~\eqref{S1}.

\subsection{Trace formula for the mass}
It is well known that, under periodic boundary conditions for instance, the mass,
or $L^2$-norm or density
$$
M(u):=\norm{u}^2_{L^2}=\int |u|^2\,\dd x
$$
of the deterministic Schr\"odinger equation $\ii \frac{\partial u}{\partial t}-\Delta u-V[u]u=0$,
where $V[u]=V$ (external potential) or $V[u]=V\star|u|^2$ (nonlocal interaction) or $V[u]=|u|^2$ (cubic nonlinearity),
is a conserved quantity.
In the stochastic case under consideration, one immediately gets a trace formula
for the mass of the exact solution of equation \eqref{sse} as well as for the numerical
solution given by the splitting scheme \eqref{S1}.

\begin{proposition}\label{prop:mass}
Consider the stochastic Schr\"odinger equation \eqref{sse} with a trace class covariance operator $Q$
and an initial value satisfying $\E[M(u_0)]<\infty$. We assume that the nonlinearity in \eqref{sse}
is such that $F(u)=V[u]u$, where $V[u]$ is real-valued and a function of the modulus $|u|$. Furthermore,
we assume that an exact global solution exists. Finally, we assume\footnote{This is the case for instance when one considers an external potential, a nonlocal interaction,
a cubic or power-law nonlinearity.} that the differential equation,
$\ii \dot{u}=F(u)=V[u]u$, in the splitting scheme can be solved exactly.

Then, the exact solution \eqref{mild} satisfies a trace formula for the mass:
\begin{align*}
\E\left[M(u(t))\right]=\E\left[\norm{u(t)}^2_{L^2}\right]=
\E\left[M(u_0)\right]+t\alpha^2\text{Tr}(Q)\quad\text{for all times}\quad t.
\end{align*}
Furthermore, the numerical solution given by the splitting scheme \eqref{S1} to the nonlinear stochastic Schr\"odinger equation \eqref{sse} satisfies
the exact same trace formula for the mass:
\begin{align*}
\E\left[M(u_n)\right]=\E\left[M(u_0)\right]+t_n\alpha^2\text{Tr}(Q)\quad\text{for all times}\quad t_n=n\tau.
\end{align*}
\end{proposition}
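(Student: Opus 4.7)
The plan is to handle the numerical scheme by a direct one-step computation plus induction, and the exact solution by It\^o's formula applied to the functional $\phi(v)=\|v\|_{L^2}^2$. In both cases the key structural facts are (i) $S(t)$ is an isometry on $L^2$ by Lemma~\ref{lemma1}, (ii) the flow $\Phi_t$ preserves the $L^2$-norm (as noted in the section, using that $V[u]$ is real-valued and depends only on $|u|$), and (iii) the only non-vanishing contribution comes from the quadratic variation of the additive noise.

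For the splitting scheme, I would start from~\eqref{S1} and write, using isometry of $S(\tau)$,
\[
\norm{u_{n+1}}_{L^2}^2=\norm{\Phi_\tau(u_n)-\ii\alpha\delta W_n^Q}_{L^2}^2
=\norm{\Phi_\tau(u_n)}_{L^2}^2+2\alpha\,{\rm Im}\langle \Phi_\tau(u_n),\delta W_n^Q\rangle+\alpha^2\norm{\delta W_n^Q}_{L^2}^2.
\]
The first term equals $\norm{u_n}_{L^2}^2$ since $\Phi_\tau$ preserves the $L^2$-norm. For the cross term I would use that $\Phi_\tau(u_n)$ is $\mathcal{F}(n\tau)$-measurable while $\delta W_n^Q$ is independent of $\mathcal{F}(n\tau)$ with zero mean, so conditioning makes its expectation vanish. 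For the last term, expanding $\delta W_n^Q$ in the basis $(e_k)$ and using independence of the $\beta_k$ yields $\E[\norm{\delta W_n^Q}_{L^2}^2]=\tau\sum_{k}|\gamma_k|^2\norm{e_k}_{L^2}^2=\tau\,\Tr(Q)$. Taking expectations gives $\E[M(u_{n+1})]=\E[M(u_n)]+\tau\alpha^2\Tr(Q)$, and a straightforward induction on $n$ closes the discrete part.

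For the exact solution, I would apply It\^o's formula to $\phi(u)=\norm{u}_{L^2}^2$ along the process solving~\eqref{sse}, obtaining
\[
\dd\norm{u(t)}_{L^2}^2=2\,{\rm Re}\langle u(t),-\ii\Delta u(t)-\ii F(u(t))\rangle\,\dd t
-2\alpha\,{\rm Re}\langle u(t),\ii\,\dd W^Q(t)\rangle+\alpha^2\Tr(Q)\,\dd t.
\]
The Laplacian contribution vanishes because $\langle u,\Delta u\rangle=-\norm{\nabla u}_{L^2}^2$ is real, so ${\rm Re}(-\ii\langle u,\Delta u\rangle)=0$; the nonlinear contribution vanishes by the assumption $F(u)=V[u]u$ with $V[u]$ real-valued, which gives ${\rm Im}\langle u,F(u)\rangle=0$ as already observed after Assumption~\ref{as1}. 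The It\^o correction $\alpha^2\Tr(Q)$ comes from $\norm{(-\ii\alpha)Q^{1/2}}_{\mathcal{L}_2^0}^2=\alpha^2\sum_k|\gamma_k|^2\norm{e_k}_{L^2}^2$. Taking expectations kills the martingale term and yields the stated identity.

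The main obstacle is a technical one: justifying It\^o's formula for the mild solution, since the SPDE involves the unbounded operator $\Delta$. I would handle this in the standard way, either by invoking an It\^o formula for mild solutions (the moment bound in $H^2$ from Proposition~\ref{prop:exact} under Assumption~\ref{as2}, or $H^1$ regularity under Assumption~\ref{as1}, gives ample regularity), or by a Galerkin/spectral truncation approximation, applying the finite-dimensional It\^o formula to a projected equation, and passing to the limit using the mass moment bounds already established. Apart from this well-understood step, both parts of the statement reduce to algebraic identities based on isometry and independence.
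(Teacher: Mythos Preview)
Your proposal is correct and follows essentially the same route as the paper: It\^o's formula for the continuous-time trace formula (with the drift terms vanishing because $\langle u,\Delta u\rangle$ and $\langle u,F(u)\rangle$ are real), and a one-step expansion using the isometry of $S(\tau)$, norm preservation of $\Phi_\tau$, and independence of the Wiener increment for the discrete case. Your write-up is in fact slightly more explicit about why each drift term vanishes and about the technical justification of It\^o's formula for the mild solution, which the paper leaves implicit.
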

Observe that the above result for the exact solution is already available in the literature in different settings,
for instance in \cite{MR1954077,MR3771721}. However, to the best of our knowledge, the result for the numerical solution is one of the first results in the literature on a longtime qualitative behavior of explicit numerical solutions to nonlinear SPDEs driven by It\^o noise. Such a longtime behavior is not satisfied for classical time integrators like the (semi-implicit) Euler--Maruyama schemes, see the numerical experiments in Section~\ref{sec-num}. Trace formulas for numerical schemes applied to stochastic linear Schr\"odinger, wave, and Maxwell
equations driven by additive noise have been obtained in~\cite{MR3771721,MR3033008,MR4077824}.
For implicit schemes applied to nonlinear stochastic wave equations, we refer to \cite{MR4362830}.

\begin{proof}
We apply It\^o's formula to the mass $M(u(t))$ and get
\begin{equation}\label{eq:trace}
\begin{aligned}
M(u(t))&=M(u(0))+\int_0^t\langle M'(u(s)),\,-\ii\alpha\dd W(s)\rangle \\
&\quad+\int_0^t\langle M'(u(s)),-\ii\Delta u(s)-\ii V[u]u(s) \rangle\,\dd s\\
&\quad+\int_0^t\frac12\alpha^2\text{Tr}\left[M''(u(s))\left(Q^{1/2}\right)\left(Q^{1/2}\right)^*\right]\,\dd s.
\end{aligned}
\end{equation}

An integration by parts and the hypothesis on the potential $V$ show that the third term on the right-hand side is zero.
Taking expectation now gives
\begin{align*}
\E\left[M(u(t))\right]=\E\left[M(u(0))\right]+t\alpha^2\text{Tr}(Q)
\end{align*}
which concludes the proof of the trace formula for the mass of the exact solution.

We next show that the above trace formula is also satisfied for the numerical
solution given by the splitting integrator \eqref{S1}. Using the definition of the numerical scheme \eqref{S1},
properties of the Wiener increments $\delta W_n^Q$, as well as the isometry property of $S(\tau)$, one gets
\begin{align*}
\E\left[ M(u_{n+1}) \right]&=\E\left[ \norm{S(\tau)\Phi_\tau(u_n)}^2_{L^2} \right]+\alpha^2\E\left[ \norm{\delta W_n^Q}_{L^2}^2 \right]\\
&=\E\left[ \norm{\Phi_\tau(u_n)}_{L^2}^2 \right]+\tau\alpha^2\text{Tr}(Q).
\end{align*}
The isometry property of the flow $\Phi_\tau$ yields
\begin{align*}
\E\left[ M(u_{n+1}) \right]=\E\left[ M(u_{n}) \right]+\tau\alpha^2\text{Tr}(Q)
\end{align*}
and a recursion completes the proof of the proposition.
\end{proof}

\begin{remark}
The same trace formula for the mass holds for the numerical solution given by the time integrator \eqref{S2}.
Indeed, using the definition of the numerical scheme \eqref{S2}, It\^o's isometry,
as well as the isometry property of the operator $S(\tau)$ and of the flow $\Phi_\tau$, one also gets
\begin{align*}
\E\left[ M(u_{n+1}) \right]=\E\left[ \norm{\Phi_\tau(u_n)}_{L^2}^2 \right]+\tau\alpha^2\text{Tr}(Q)
=\E\left[ M(u_{n}) \right]+\tau\alpha^2\text{Tr}(Q).
\end{align*}
\end{remark}

\begin{remark}
It may also be possible to study the longtime behavior of the exact and numerical solutions along
the expected value of the Hamiltonian of \eqref{sse} with $\alpha\neq0$. However, in general, the drift in the expected
Hamiltonian will depend on the solution $u$, see for example \cite[Equation~$(11)$]{Numericalsimulationfocusingstochastic}
for the cubic case. In particular, the evolution of this quantity will not be linear in time.
Such a trace formula for the energy will thus unfortunately not be as simple
as the one for the mass. Very recent studies have been carried on for (mainly) the Crank--Nicolson scheme in the
preprint \cite{mry20}. In particular, it is observed that
this numerical scheme does not verify an exact trace formula for the mass,
see also the numerical experiments below.
We leave the question of investigating such trace formula for the Hamiltonian of the splitting scheme for future work.
\end{remark}

\subsection{Stochastic symplecticity}
Symplectic schemes are known to have excellent longtime properties when applied
to Hamiltonian (partial) differential equations,
see for instance \cite{MR2132573,MR2840298,MR2193969,MR2425152,
MR2995211,MR3401810,MR3712186} and references therein.
These particular integrators have thus naturally
come into the realm of stochastic (partial) differential equations,
see for example \cite{MR2491434,MR2926251,MR3218332,MR1897950,MR3542010,MR3986278,cchs19}
and references therein.

The stochastic Schr\"odinger equation can be interpreted as a canonical
infinite-dimensional Hamiltonian system, see \cite{MR3986278}.
The next result shows that the exact flow of the SPDE~\eqref{sse}
as well as the proposed splitting scheme \eqref{S1} are stochastic symplectic.

\begin{proposition}\label{prop:symp}
Consider the stochastic Schr\"odinger equation \eqref{sse}
and assume that a global solution exists.
Under the same assumptions as in Proposition~\ref{prop:mass},
the exact flow of this SPDE
is stochastic symplectic in the sense that it preserves the symplectic form
$$
\bar\omega(t)=\int_{\mathbb{T}^d}\dd p\wedge \dd q\,\dd x\quad\text{a.s.},
$$
where the overbar on $\omega$ is a reminder that the two-form $\dd p\wedge \dd q$
(with differentials made with respect to the initial value) is integrated over the torus.
Here, $p$ and $q$ denote the real and imaginary parts of $u$.

Furthermore, the splitting scheme \eqref{S1} applied to the
stochastic Schr\"odinger equation \eqref{sse} is stochastic
symplectic in the sense that it possesses the discrete symplectic structure:
$$
\bar\omega^{n+1}=\bar\omega^{n}\quad\text{a.s.},
$$
for the symplectic form $\bar\omega^{n}:=\displaystyle\int_{\mathbb{T}^d}\dd p_{n}\wedge \dd q_{n}\,\dd x$,
where $p_n$, resp. $q_n$ denoting the real and imaginary parts of $u_n$,
and $\dd$ denotes differentials in the phase space.
\end{proposition}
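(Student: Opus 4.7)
My plan is to exploit the Hamiltonian structure of \eqref{sse} together with the fact that the noise in \eqref{sse} is additive. Writing $u = p + \ii q$, the real and imaginary parts satisfy
\begin{align*}
\dd p &= \bigl(\Delta q + V[u]q\bigr)\,\dd t + \alpha\,\dd W^{Q,\mathrm{Im}},\\
\dd q &= -\bigl(\Delta p + V[u]p\bigr)\,\dd t - \alpha\,\dd W^{Q,\mathrm{Re}},
\end{align*}
which is an infinite-dimensional Hamiltonian system with additive noise. The drift is of the form $J^{-1}\nabla H$ for a Hamiltonian $H$ combining $\tfrac12\int(|\nabla p|^2+|\nabla q|^2)\,\dd x$ and a potential $\mathcal{V}[u]$ that can be written explicitly in each of the two motivating examples (e.g.\ $\mathcal{V}[u] = \tfrac14\int\!\!\int V(x-y)|u(x)|^2|u(y)|^2\,\dd x\,\dd y$ in the nonlocal interaction case). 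The existence of such a real-valued Hamiltonian is precisely what the hypothesis that $V[u]$ is real-valued and depends only on $|u|$ encodes.

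For the exact flow I would differentiate the mild formulation \eqref{mild} with respect to $u_0$. Because the stochastic convolution in \eqref{mild} does not involve $u_0$, the resulting variational process $J_t = \partial u(t)/\partial u_0$ satisfies a purely pathwise linear equation with random Hamiltonian coefficients and no stochastic integral. Computing $\dd \bar\omega(t)$ pathwise from this variational equation then reduces to the classical proof of symplecticity for Hamiltonian ODEs: the time derivative vanishes after an integration by parts that uses self-adjointness of $\Delta$ on $\T^d$ together with the hypothesis on $V[u]$. Equivalently, applying It\^o's formula directly to $\bar\omega(t)$ as a functional of $u$ in \eqref{sse} produces no martingale term and no It\^o correction, because $\bar\omega$ is bilinear in derivatives of $u$ with respect to $u_0$ and the noise does not depend on $u_0$; the remaining drift is again killed by the Hamiltonian cancellation.

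For the splitting scheme I would decompose one step as the composition $u_{n+1} = \Psi_2\circ\Psi_1(u_n)$ with $\Psi_1 = \Phi_\tau$ and $\Psi_2(v) = S(\tau)v - \ii\alpha S(\tau)\delta W_n^Q$, and check that each preserves the symplectic form. The map $\Psi_1$ is the time-$\tau$ flow of the deterministic Hamiltonian ODE $\ii\dot u = F(u)$, hence symplectic by classical theory; this can also be verified directly from the explicit form $\Phi_\tau(u) = \e^{-\ii\tau V[u]}u$, using that $V[u]$ is real-valued together with the identity $V[\Phi_t(u)] = V[u]$ already derived in Section~\ref{sec-split}. The map $\Psi_2$ has pathwise Jacobian equal to $S(\tau) = \e^{-\ii\tau\Delta}$, independent of both the state and the noise; $S(\tau)$ is the time-$\tau$ flow of the linear Hamiltonian equation $\ii\dot u = \Delta u$ and is hence symplectic, while the additive Wiener increment is a pathwise constant shift in $u$ which does not contribute to $\dd p\wedge\dd q$. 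Composing the two steps gives $\bar\omega^{n+1} = \bar\omega^n$ almost surely, and an obvious recursion concludes.

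The main obstacle is giving rigorous meaning to the infinite-dimensional form $\bar\omega$ and to the preservation identity, rather than treating them formally. A clean way to proceed is to work on finite-dimensional Galerkin truncations along eigenspaces of $\Delta$ (which $S(\tau)$ preserves exactly and along which $\Phi_\tau$ can be suitably approximated), prove the corresponding symplecticity identities at each truncation level, and pass to the limit using the well-posedness and moment bounds of Proposition~\ref{prop:exact}. Alternatively, one can define $\bar\omega$ via its pairing with pairs of tangent vectors in $L^2$ using the bounded variational operator $J_t$, which avoids infinite-dimensional wedge products altogether. In both routes, the essential structural observation is that additive noise produces a deterministic variational equation, so no genuinely stochastic calculus beyond the pathwise chain rule intervenes.
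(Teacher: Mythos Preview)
Your approach is correct and mirrors the paper's: for the scheme you decompose one step into $\Phi_\tau$ followed by an affine map with linear part $S(\tau)$, observe that the additive Wiener increment drops out under differentiation with respect to initial data, and conclude by composing symplectic maps---exactly the paper's argument. For the exact flow the paper merely refers to prior work on stochastic symplecticity for Schr\"odinger equations, so your sketch of the variational-equation argument and your discussion of infinite-dimensional rigor via Galerkin truncation or via the bounded variational operator go beyond what the paper actually writes, but the underlying idea is the same.
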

\begin{proof}
The symplecticity of the phase flow of the stochastic Schr\"odinger equation \eqref{sse}
can be shown using similar arguments as in \cite[Theorem 3.1]{MR3986278} for
a stochastic cubic Schr\"odinger equation with quadratic potential, see also \cite{MR3542010}.

In order to show that the numerical solution is stochastic symplectic as well,
we use the same argument as in the proof of \cite[Prop. 4.3]{cchs19}.
Taking the differential of the numerical solution yields
\begin{align*}
\dd u_{n+1}=\dd\left( S(\tau)\left( \Phi_\tau(u_n)-\ii\alpha\delta W_n^Q\right) \right)=
\dd\left(S(\tau)\Phi_\tau(u_n)\right)=\dd u_n,
\end{align*}
where in the last equality we have used the fact that the composition of exact flows is symplectic.
This concludes the proof.
\end{proof}
\begin{remark}
The exact same proof shows that the splitting scheme \eqref{S2} possesses a discrete symplectic structure.
\end{remark}

\section{Convergence results}\label{sec-conv}
In this section, we study various types of convergence (strong, in probability and almost-surely)
of the splitting scheme \eqref{S1} when applied to the stochastic Schr\"odinger equation \eqref{sse}.
In order to do this, we first show bounds for the exponential moments in the $L^2$ norm of the
exact and numerical solutions as well as two auxiliary results.
The proofs of these results are given in Section~\ref{sec-proofs} for the reader's convenience.
These proofs could also be obtained using tools from~\cite{MR4079431}.

\begin{theorem}\label{thm-expm}
Let us apply the splitting scheme \eqref{S1} to the stochastic Schr\"odinger equation \eqref{sse}
with a trace class covariance operator $Q$ and deterministic initial value $u_0\in L^2$.
Assume that the nonlinearity in \eqref{sse} satisfies
$F(u)=V[u]u$, where $V[u]=V[\bar{u}]$ is real-valued and that Assumption~\ref{as1} holds. One then has the following bounds for the exponential moments:
there exists $\kappa>0$ such that
if $\mu \alpha^2T<\frac{\kappa}{{\rm Tr}(Q)}$, then one has:
$$
\underset{0\le t\le T}\sup~\E\left[\exp\left(\mu\norm{u(t)}^2_{L^2}\right)\right]\leq
C(\mu,T,\alpha,Q,u_0)<\infty
$$
for the exact solution and there exists $\tau^\star>0$ such that
$$
\underset{\tau\in(0,\tau^\star)}\sup~\underset{0\le n\tau\le T}\sup~\E\left[\exp\left(\mu\norm{u_n}^2_{L^2}\right)\right]
\leq C(\mu,T,\alpha,Q,u_0)<\infty
$$
for the numerical solution.
\end{theorem}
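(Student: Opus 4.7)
The plan rests on the following key structural observation. Under the hypothesis that $V[u]=V[\bar u]$ is real-valued, together with the self-adjointness of $\Delta$ and the isometry of $S(t)$, the squared $L^2$-norm $X_t:=\norm{u(t)}_{L^2}^2$ satisfies a remarkably simple scalar dynamics: the contributions from $\Delta$ and from $F$ vanish after taking real parts, so only the It\^o drift $\alpha^2\Tr(Q)$ survives, together with a real-valued martingale. The exact same structure is preserved at the discrete level because $S(\tau)$ is an $L^2$-isometry and $\Phi_\tau$ preserves the modulus. In both cases I would reduce the problem to this scalar process and then handle the noise-induced exponential corrections.

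For the exact solution, I would first apply It\^o's formula (justified by a Galerkin or spectral-truncation argument) to obtain
\[
dX_t=\alpha^2\Tr(Q)\,dt+dM_t,\qquad d\langle M\rangle_t=4\alpha^2\sum_k|\gamma_k|^2\bigl({\rm Im}\langle u(t),e_k\rangle\bigr)^2\,dt\le 4\alpha^2\Tr(Q)\,X_t\,dt.
\]
A second application of It\^o's formula to $e^{\mu X_t}$, followed by taking expectation, gives for $h(t,\mu):=\E[e^{\mu X_t}]$ the first-order differential inequality
\[
\partial_t h(t,\mu)\le \mu\alpha^2\Tr(Q)\,h(t,\mu)+2\mu^2\alpha^2\Tr(Q)\,\partial_\mu h(t,\mu),
\]
where I used $\partial_\mu h=\E[X_t e^{\mu X_t}]$. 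I would solve this by characteristics $\dot\mu(s)=-2\mu(s)^2\alpha^2\Tr(Q)$: integrating backwards from $\mu(T)=\mu$, the characteristic remains bounded and positive precisely as long as $2\mu\alpha^2\Tr(Q)\,T<1$, which yields the threshold $\mu\alpha^2 T<\kappa/\Tr(Q)$, and Gronwall along the characteristic produces an explicit bound on $h(T,\mu)$ depending only on $\norm{u_0}_{L^2}^2$, $T$, $\mu$, $\alpha$ and $Q$.

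For the numerical scheme, the isometry of $S(\tau)$ and the identity $\norm{\Phi_\tau(u_n)}_{L^2}=\norm{u_n}_{L^2}$ yield the exact recursion
\[
X_{n+1}=X_n+\alpha^2\norm{\delta W_n^Q}_{L^2}^2+2\alpha\,{\rm Im}\langle \Phi_\tau(u_n),\delta W_n^Q\rangle.
\]
Conditionally on $\mathcal{F}_{n\tau}$, the exponent $\mu X_{n+1}$ is a quadratic form in the Gaussian vector $\delta W_n^Q$, so $\E[e^{\mu X_{n+1}}\mid\mathcal{F}_{n\tau}]$ reduces to a diagonal Gaussian integral in the basis $(e_k)$; its convergence requires $\tau$ smaller than some $\tau^\star$ depending on $\mu,\alpha$ and $\sup_k|\gamma_k|^2$. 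Combining the elementary bound $(1-x)^{-1/2}\le e^x$ on $[0,1/2]$ with $\sum_k|\gamma_k|^2\bigl({\rm Im}\langle \Phi_\tau(u_n),e_k\rangle\bigr)^2\le \Tr(Q)X_n$, I would deduce a recursion of the form
\[
\E\bigl[e^{\mu X_{n+1}}\mid\mathcal{F}_{n\tau}\bigr]\le \exp\bigl(2\tau\mu\alpha^2\Tr(Q)\bigr)\exp\bigl((\mu+4\mu^2\alpha^2\tau\Tr(Q))X_n\bigr).
\]
Iterating this backwards from step $N$, with the effective exponent updated by $\mu\mapsto\mu+4\mu^2\alpha^2\tau\Tr(Q)$ at each step, is exactly the discrete counterpart of the same ODE $\dot\mu=-C\mu^2$ from the continuous analysis, and the iterates stay bounded under a smallness condition of the same form $\mu\alpha^2T<\kappa/\Tr(Q)$, producing the claimed uniform-in-$(n,\tau)$ bound.

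The main obstacle is precisely this closure step. In both the continuous and the discrete case, the It\^o-type correction introduces a feedback that is genuinely quadratic in $\mu$ (either the term $2\mu^2\alpha^2\Tr(Q)\partial_\mu h$, or the increment of order $\mu^2\tau$ in the effective exponent). It is this quadratic self-interaction that forces the smallness condition $\mu\alpha^2T<\kappa/\Tr(Q)$: outside this range the characteristic curve (respectively the backward iterates $\mu_n$) escapes to infinity before reaching the endpoint, and the exponential moments can be infinite. A secondary, more technical point is the rigorous justification of It\^o's formula for $\norm{u(t)}_{L^2}^2$ in the mild setting and of the Gaussian computation in infinite dimensions, both of which should be handled by the standard localization/spectral-truncation machinery.
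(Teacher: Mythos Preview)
Your proposal is correct and rests on the same structural observation as the paper---namely that $\norm{u(t)}_{L^2}^2$ (and its discrete counterpart) evolves only through the noise---but the mechanism you use to close the exponential-moment estimate is genuinely different. The paper fixes in advance a time-dependent exponent with exponential decay, $\lambda e^{-t/T}$ in the continuous case and $r_n=\lambda e^{-n/N}$ in the discrete case, and chooses $\lambda=\frac{1}{2T\alpha^2\Tr(Q)}$ so that the quadratic-variation contribution is exactly cancelled by the extra drift $-X(t)/T$; this yields $\kappa=e^{-1}/2$. You instead let the exponent follow the characteristic $\dot\mu=-2\mu^2\alpha^2\Tr(Q)$, which is the natural curve dictated by the first-order differential inequality; this gives the sharper threshold $\kappa=1/2$. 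For the numerical scheme the paper separates the three factors in $e^{\mu X_{n+1}}$ by H\"older (introducing auxiliary conjugate exponents $p,q$) and bounds each Gaussian expectation individually, whereas you evaluate the conditional expectation as a single diagonal Gaussian integral in the basis $(e_k)$ and then iterate the effective exponent; your route avoids the H\"older loss at the price of tracking the backward recursion $\mu_n=\mu_{n+1}+O(\mu_{n+1}^2\tau)$.

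Two small points worth tightening when you write it out. First, the method of characteristics applies here to an \emph{inequality}, and you implicitly use $\partial_\mu h=\E[X_t e^{\mu X_t}]\ge 0$ to make the subtraction go the right way; state this. Second, in the discrete Gaussian computation your $\tau^\star$ must be chosen so that $2\mu_0\alpha^2\sup_k|\gamma_k|^2\tau\le 1/2$ for the \emph{largest} exponent $\mu_0$ arising in the backward iteration, not just for the target $\mu$; since $\mu_0$ is bounded under your smallness condition this is fine, but it should be made explicit.
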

In the proof of Theorem~\ref{thm-expm}, the lower bound $\kappa\geq\frac{e^{-1}}{2}$ is obtained, note that it does not depend on the nonlinearity. Furthermore, observe that the condition $\mu \alpha^2T<\frac{\kappa}{{\rm Tr}(Q)}$ gets more restrictive when $\alpha$ and $T$ increase. In addition, the proof of Theorem~\ref{thm-expm} reveals that one may choose any $\tau^\star\in\bigl(0,p(p-1)\bigr)$ where $p=\frac{e^{-1}}{\mu 2\alpha^2 T{\textup{Tr}}(Q)}>1$.

%\coco{?? add a remark: Following the arguments of the proof of Theorem~11, one obtains the following sufficient condition on $\tau_\star$, which depends on $\mu$: $0<\tau_\star<p(p-1)$ where $p=\frac{e^{-1}}{\mu 2T{\textup Tr}(Q_\alpha)}>1$.

%CEB: il faudrait ecrire $\tau^\star>0$? on donne la valeur? on change un peu la fin de la preuve au lieu de "il existe"?}

It is immediate to deduce the following moment estimates for the exact and numerical solutions from Theorem~\ref{thm-expm}.
\begin{corollary}\label{cor:momentsL2}
Under the assumptions of the previous theorem, for any $p\in[1,\infty)$ and $T\in(0,\infty)$, one has the following moment estimates for the $L^2$ norm of the exact and numerical solutions: for any $u_0\in L^2$, there exists $C_p(T,\alpha,Q,u_0)\in(0,\infty)$ such that
$$
\underset{0\le t\le T}\sup~\E[\norm{u(t)}_{L^2}^{2p}]\le C_p(T,\alpha,Q,u_0)
$$
and
$$
\underset{\tau\in(0,1)}\sup~\underset{0\le n\tau\le T}\sup~\E[\norm{u_n}_{L^2}^{2p}]\le C_p(T,\alpha,Q,u_0).
$$
\end{corollary}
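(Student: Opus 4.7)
The plan is to deduce polynomial $L^{2p}$ moment bounds from the exponential moment bounds in Theorem~\ref{thm-expm} via a direct comparison inequality. The key elementary fact is that for any $\mu>0$ and any $p\in[1,\infty)$, the inequality
\[
x^{p}\le \bigl(p/(\mu\e)\bigr)^{p}\,\e^{\mu x}
\]
holds for all $x\ge 0$ (by optimizing $x\mapsto x^{p}\e^{-\mu x}$ over $x\ge 0$). Applying this with $x=\norm{u(t)}_{L^2}^{2}$ (or $x=\norm{u_n}_{L^2}^{2}$) converts control of exponential moments into control of $L^{2p}$ moments.

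The first step is to fix $T\in(0,\infty)$ and $p\in[1,\infty)$, and to pick any $\mu>0$ sufficiently small so that the admissibility condition $\mu\alpha^{2}T<\kappa/\Tr(Q)$ from Theorem~\ref{thm-expm} is satisfied; for instance $\mu=\kappa/(2\alpha^{2}T\Tr(Q))$ works. With this choice of $\mu$ (which depends on $T$, $\alpha$, $Q$ but not on $t$, $n$, or $\tau$), Theorem~\ref{thm-expm} supplies a finite constant $C(\mu,T,\alpha,Q,u_0)$ such that
\[
\sup_{0\le t\le T}\E\!\left[\e^{\mu\norm{u(t)}_{L^2}^{2}}\right]\le C(\mu,T,\alpha,Q,u_0),\qquad
\sup_{\tau\in(0,\tau^{\star})}\sup_{0\le n\tau\le T}\E\!\left[\e^{\mu\norm{u_n}_{L^2}^{2}}\right]\le C(\mu,T,\alpha,Q,u_0).
\]

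The second step is to apply the elementary inequality pointwise in $\omega\in\Omega$, namely
\[
\norm{u(t)}_{L^2}^{2p}\le \bigl(p/(\mu\e)\bigr)^{p}\,\e^{\mu\norm{u(t)}_{L^2}^{2}},
\]
and the analogous bound for $\norm{u_n}_{L^2}^{2p}$. Taking expectation and then the supremum over $t\in[0,T]$, respectively over $n$ with $n\tau\le T$ and $\tau\in(0,\tau^{\star})$, yields both inequalities of the corollary with the constant
\[
C_p(T,\alpha,Q,u_0)=\bigl(p/(\mu\e)\bigr)^{p}\,C(\mu,T,\alpha,Q,u_0),
\]
which is finite and independent of the time-step $\tau$.

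There is no genuine obstacle here: the whole argument is a one-line consequence of Theorem~\ref{thm-expm} combined with the standard bound $x^{p}\lesssim \e^{\mu x}$. The only point to be careful about is that $\mu$ must be chosen as a function of $T$ (and $\alpha,Q$) to respect the admissibility condition, so the resulting constant $C_p(T,\alpha,Q,u_0)$ inherits the same dependence but, crucially, remains uniform in $\tau\in(0,\tau^{\star})$ in the numerical case.
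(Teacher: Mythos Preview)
Your proof is correct and is exactly the ``immediate'' deduction the paper alludes to: the authors do not spell out any details beyond stating that the corollary follows directly from Theorem~\ref{thm-expm}, and your use of the elementary bound $x^{p}\le (p/(\mu\e))^{p}\e^{\mu x}$ is the standard way to make that deduction explicit.
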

%Observe that one could show this statement for $\tau\in(0,1)$ directly without using the
%exponential moments estimates given in Theorem~\ref{thm-expm}. \ceb{et en fait pas besoin de $\tau^\star$?}

In order to show the main convergence result of this article, we will make use of the
following two propositions. Each one of these propositions are used to show strong convergence order $1/2$,
resp. $1$, of the numerical solution given by the splitting scheme.

\begin{proposition}\label{prop:conv1/2}
Consider the time discretization of the stochastic Schr\"odinger equation \eqref{sse} by the splitting scheme \eqref{S1}. Let Assumptions~\ref{as1} and~\ref{as3} be satisfied. Assume that $u_0\in H^1$.

Let $T\in(0,\infty)$. For every $q\in[1,\infty)$, there exists $\tau^\star>0$ and $C_q(T,u_0)\in(0,\infty)$ (which depends on $F$, $Q$ and on $\alpha$), such that for every $\tau\in(0,\tau^\star)$, one has
\[
\underset{0\le n\tau\le T}\sup~\E\left[\exp(-qK_FS_n)\norm{u_n-u(t_n)}_{L^2}^{q}\right]\le C_q(T,u_0)\tau^{\frac{q}{2}},
\]
where $S_n=\displaystyle\tau\sum_{k=0}^{n-1}\left(\norm{u(k\tau)}_{L^2}^2+\norm{u_k}_{L^2}^2\right)$, and $K_F$ is given in~\eqref{eq:KF} (see Assumption~\ref{as1}).
\end{proposition}

\begin{proposition}\label{prop:conv1}
Consider the time discretization of the stochastic Schr\"odinger equation \eqref{sse} by the splitting scheme \eqref{S1}. Let Assumptions~\ref{as2} and~\ref{as3} be satisfied. Assume that $u_0\in H^2$.

Let $T\in(0,\infty)$. For every $q\in[1,\infty)$, there exists $\tau^\star>0$ and $C_q(T,u_0)\in(0,\infty)$ (which depends on $F$, $Q$ and on $\alpha$), such that for every $\tau\in(0,\tau^\star)$, one has
\[
\underset{0\le n\tau\le T}\sup~\E\left[\exp(-qK_FS_n)\norm{u_n-u(t_n)}_{L^2}^{q}\right]\le C_q(T,u_0)\tau^{q},
\]
where $S_n=\displaystyle\tau\sum_{k=0}^{n-1}\left(\norm{u(k\tau)}_{L^2}^2+\norm{u_k}_{L^2}^2\right)$, and $K_F$ is given in~\eqref{eq:KF} (see Assumption~\ref{as1}).
\end{proposition}

%\ceb{il y a $\tau^\star$ qui apparait, expliquer, ou en fait pas necessaire?} \coco{DC: see above}

The proofs of the technical results, Theorem~\ref{thm-expm} and Propositions~\ref{prop:conv1/2} and~\ref{prop:conv1}, are postponed to Section~\ref{sec-proofs}.

We are now in position to state the main convergence result of this article.
\begin{theorem}\label{th:conv}
Let $u(t)$ denote the exact solution to the stochastic Schr\"odinger equation~\eqref{sse} and $u_n$ the numerical solution given by the splitting scheme~\eqref{S1}. Let Assumption~\ref{as3} be satisfied. Let also $\sigma=1$, resp. $\sigma=2$,
if Assumption~\ref{as1}, resp. Assumption~\ref{as2}, is satisfied. Assume that $u_0\in H^\sigma$.

Recall the notation
$S_n=\displaystyle\tau\sum_{k=0}^{n-1}\left(\norm{u(k\tau)}_{L^2}^2+\norm{u_k}_{L^2}^2\right)$.
Let $T\in(0,\infty)$. Assume that $\overline{\mu}\in(0,\infty)$ and $\tau_0\in(0,\tau^\star)$
are chosen such that
\begin{equation}\label{eq:condition_mu-tau0}
\underset{\tau\in(0,\tau_0)}\sup~\underset{0\le n\tau\le T}\sup~\E\left[\exp(\overline{\mu}S_n)\right]=C(T,u_0,\alpha,Q,\tau_0,\overline{\mu})<\infty.
\end{equation}
Then, for all $r\in(0,\infty)$ and all $\mu\in(0,\overline{\mu})$, there exists $C_r(T,u_0,\alpha,Q,\tau_0,\mu)<\infty$ such that for all $\tau\in(0,\tau_0)$ one has
\begin{equation}\label{eq:conv_mu-r}
\underset{0\le n\tau\le T}\sup~\left(\E\left[\norm{u_n-u(t_n)}_{L^2}^r\right]\right)^{\frac{1}{r}}\le
C_r(T,u_0,\alpha,Q,\tau_0,\mu)\tau^{\frac{\sigma}{2}\min(1,\frac{\mu}{rK_F})}.
\end{equation}
\end{theorem}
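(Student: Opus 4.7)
My plan is to deduce \eqref{eq:conv_mu-r} from the exponential-discounted error estimates of Propositions~\ref{prop:conv1/2} and~\ref{prop:conv1} together with the exponential moment hypothesis~\eqref{eq:condition_mu-tau0}, using a H\"older decomposition followed, when necessary, by an $L^p(\Omega)$ log-convexity interpolation.

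I would first insert the identity $e^{-rK_FS_n}\cdot e^{rK_FS_n}=1$ and apply H\"older's inequality with conjugate exponents $q/r$ and $q/(q-r)$ for a parameter $q>r$ to be chosen later:
\begin{equation*}
\E\bigl[\norm{u_n-u(t_n)}_{L^2}^r\bigr]
\le\bigl(\E\bigl[\norm{u_n-u(t_n)}_{L^2}^q\,e^{-qK_FS_n}\bigr]\bigr)^{r/q}
\bigl(\E\bigl[e^{\lambda_q S_n}\bigr]\bigr)^{(q-r)/q},
\end{equation*}
with $\lambda_q:=rqK_F/(q-r)$. The first factor is controlled by Proposition~\ref{prop:conv1/2} (when $\sigma=1$) or Proposition~\ref{prop:conv1} (when $\sigma=2$), and contributes $C_q^{r/q}\tau^{r\sigma/2}$. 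The second factor is controlled by \eqref{eq:condition_mu-tau0} as soon as $\lambda_q\le\overline{\mu}$, which rearranges to $q\ge\overline{\mu}r/(\overline{\mu}-rK_F)$ and is solvable precisely when $rK_F<\overline{\mu}$. Hence, whenever $rK_F<\overline{\mu}$, picking any such $q$ yields $\bigl(\E[\norm{u_n-u(t_n)}_{L^2}^r]\bigr)^{1/r}\le C_r\tau^{\sigma/2}$, and because $\tau\in(0,1)$ and $\min(1,\mu/(rK_F))\le 1$ this already delivers~\eqref{eq:conv_mu-r} in that range.

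To cover the remaining range $rK_F\ge\overline{\mu}$, I would interpolate. Fix an auxiliary $\mu'\in(\mu,\overline{\mu})$ (for instance $\mu'=(\mu+\overline{\mu})/2$) and set $r_1:=\mu'/K_F$, so that $r_1K_F=\mu'<\overline{\mu}$ and the previous step gives $\bigl(\E[\norm{u_n-u(t_n)}_{L^2}^{r_1}]\bigr)^{1/r_1}\le C\,\tau^{\sigma/2}$. For any $r_2>r$, Corollary~\ref{cor:momentsL2} (applied separately to $u_n$ and $u(t_n)$) together with the triangle inequality provides $\bigl(\E[\norm{u_n-u(t_n)}_{L^2}^{r_2}]\bigr)^{1/r_2}\le C_{r_2}$ uniformly in $n$ and $\tau$. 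Log-convexity of $L^p(\Omega)$-norms then yields
\begin{equation*}
\bigl(\E[\norm{u_n-u(t_n)}_{L^2}^r]\bigr)^{1/r}\le C\,\tau^{\theta\sigma/2},
\qquad \frac{1}{r}=\frac{\theta}{r_1}+\frac{1-\theta}{r_2}.
\end{equation*}
Since $\theta\to r_1/r=\mu'/(rK_F)>\mu/(rK_F)$ as $r_2\to\infty$, fixing $r_2$ large enough ensures $\theta\ge\mu/(rK_F)$, and $\tau\in(0,1)$ then gives $\tau^{\theta\sigma/2}\le\tau^{\sigma\mu/(2rK_F)}$, which is the required estimate.

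The main obstacle is conceptual rather than technical, and is really resolved upstream by Propositions~\ref{prop:conv1/2}--\ref{prop:conv1}: the failure of $F$ to be globally Lipschitz forces the error estimates to carry the random discount $e^{-qK_FS_n}$. The present step is the book-keeping that trades this discount against the exponential-moment room $\overline{\mu}-\mu$ provided by \eqref{eq:condition_mu-tau0}, and the degradation by the factor $\mu/(rK_F)$ for large $r$ is exactly the price paid for unwinding the discount. The only care needed is to choose $q$, $r_1$ and $r_2$ so that no exponential-moment constraint is violated; all other steps are the standard H\"older and interpolation inequalities.
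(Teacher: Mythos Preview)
Your argument is correct, but it follows a genuinely different route from the paper. The paper does \emph{not} use $L^p$-interpolation; instead it introduces a truncation on the event $\{S_n\le R\}$, writing $\E[e_n^r]=\E[e_n^r\mathbf{1}_{S_n\le R}]+\E[e_n^r\mathbf{1}_{S_n>R}]$. The tail piece is controlled by Markov's inequality and~\eqref{eq:condition_mu-tau0}, giving $Ce^{-\mu R}$; the truncated piece is handled by H\"older against $e^{-qK_FS_n}$ (so that Propositions~\ref{prop:conv1/2}/\ref{prop:conv1} apply) and the truncation absorbs the residual exponential, yielding $C\tau^{r\sigma/2}e^{\max(0,rK_F-\mu)R}$. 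One then optimises over $R$: letting $R\to\infty$ when $rK_F\le\mu$, and choosing $e^{-R}=\tau^{\sigma/(2K_F)}$ otherwise. This single optimisation handles both regimes uniformly.

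Your approach instead gets the full rate $\sigma/2$ directly via H\"older whenever $rK_F<\overline{\mu}$ (no truncation needed), and then recovers the degraded rate for larger $r$ by log-convex interpolation between a small exponent $r_1=\mu'/K_F$ (where the full rate is available) and a large exponent $r_2$ (where only the uniform moment bound from Corollary~\ref{cor:momentsL2} is used). The trade-off: the paper's argument is a single self-contained optimisation, while yours is more modular and makes it transparent that the rate $\sigma/2$ is actually achieved on a slightly larger range ($rK_F<\overline{\mu}$ rather than $rK_F\le\mu$) before any degradation is needed. Both are clean and standard; neither is obviously superior.
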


The positive parameter $r$ in Theorem~\ref{th:conv} can be chosen arbitrarily close to $0$. However, one needs to be careful when using values of $r$ which are smaller than $1$: indeed the mapping $(X,Y)\mapsto \left(\mathbb{E}[|X-Y|^r]\right)^{\frac{1}{r}}$ is not a distance. Being able to choose arbitrarily small positive values of $r$ is important in the analysis,
in order to prove Corollary~\ref{cor:pas} below.

As a consequence of Theorem~\ref{th:conv} above, the convergence is polynomial in $L^r(\Omega)$, for all $r\in[1,\infty)$.
The rate of convergence of the splitting scheme depends on $r$ in~\eqref{eq:conv_mu-r}, and vanishes when $r\to \infty$. Note that for sufficiently small $r>0$, one has $\min(1,\frac{\mu}{rK_F})=1$, thus the convergence rate of the splitting scheme is $\frac{\sigma}{2}$ when $r$ is sufficiently small. Observe also that a sufficient condition for condition \eqref{eq:condition_mu-tau0} to
be verified is that
\[
\overline{\mu}<\frac{\kappa}{\alpha^2T^2{\rm Tr}(Q)},
\]
where $\kappa>0$ is some positive constant (see Theorem~\ref{thm-expm} above and Remark~\ref{rem:expm} below). Thus the value of $\min(1,\frac{\mu}{rK_F})$ depends on the quantity $\alpha^2T^2K_F$ (considering that ${\rm Tr}(Q)$ is fixed and that the size of the noise is given by $\alpha$). The larger this quantity, the more restrictive the condition to have $\min(1,\frac{\mu}{rK_F})=1$ becomes.

In the external potential case $V[u]=Vu$, one has $K_F=0$, thus there is no restrictions and the order of convergence is $\frac{\sigma}{2}$ in $L^r(\Omega)$ for all $r\in[1,\infty)$.

\begin{remark}\label{rem:expm}
Owing to Theorem~\ref{thm-expm} concerning exponential moments of the exact and numerical solutions, the set of parameters $\overline\mu,\tau_0$ such that~\eqref{eq:condition_mu-tau0} holds is non-empty. Indeed,
recalling that $S_n=\displaystyle\tau\sum_{k=0}^{n-1}\left(\norm{u(k\tau)}_{L^2}^2+\norm{u_k}_{L^2}^2\right)$ and
using Cauchy--Schwarz inequality, one has
\begin{align*}
\E\left[\exp(\mu S_n)\right]&=\E\left[\prod_{k=0}^{n-1}
\left(\exp(\mu\tau\norm{u(t_k)}_{L^2}^2)\exp(\mu\tau\norm{u_k}_{L^2}^2)\right)\right]\\
&\le \prod_{k=0}^{n-1}\left(\E\left[\exp(2n\tau\mu\norm{u(t_k)}_{L^2}^2)\right]\E\left[\exp(2n\tau\mu\norm{u_k}_{L^2}^2)\right]\right)^{\frac{1}{2n}}\\
&\le \underset{0\le k\le n}\sup~\E\left[\exp(2T\mu\norm{u(t_k)}_{L^2}^2)\right]
\underset{0\le k\le n}\sup~\E\left[\exp(2T\mu\norm{u_k}_{L^2}^2)\right]\\
&\le C(\mu,T,\alpha,Q,u_0),
\end{align*}
if $\mu<\frac{\kappa}{\alpha^2T^2{\rm Tr}(Q)}$ and $\tau<\tau^\star$, where $\kappa$ and $\tau^\star$ are given in Theorem~\ref{thm-expm}. The value of $\mu$ obtained by the argument above (as well as the values of $\kappa=\frac{\e^{-1}}{2}$ and $\tau^\star$) may not be optimal.
\end{remark}

\begin{proof}[Proof of Theorem~\ref{th:conv}]
Set $e_n=\norm{u_n-u(t_n)}_{L^2}$. For every $R\in(0,\infty)$, let $\chi_{n,R}=1_{S_n\le R}$. Then
\[
\E[e_n^r]=\E[e_n^r\chi_{n,R}]+\E[e_n^r(1-\chi_{n,R})].
\]
For a given $\mu\in(0,\overline{\mu})$, let $p\in(1,\infty)$ such that $\mu=(1-\frac{1}{p})\overline{\mu}$.

On the one hand, applying the Cauchy--Schwarz and Markov
inequalities yields
\begin{align*}
\E\left[e_n^r(1-\chi_{n,R})\right]&\le \left(\E\left[e_n^{rp}\right]\right)^{\frac{1}{p}}
\left(\E\left[1-\chi_{n,R}\right]\right)^{1-\frac1p}\le
\left(\E[e_n^{rp}]\right)^{\frac{1}{p}} \mathbb{P}(S_n>R)^{1-\frac1p} \\
&\le \left(\E[e_n^{rp}]\right)^{\frac{1}{p}} \mathbb{P}\left(\exp(\overline{\mu}S_n)>\exp(\overline{\mu}R)\right)^{1-\frac1p}\\
&\le \left(\E[e_n^{rp}]\right)^{\frac{1}{p}}
\left(\frac{\E[\exp(\overline{\mu}S_n)]}{\exp(\overline{\mu}R)}\right)^{1-\frac1p}.
\end{align*}
Using moment bounds for the exact and the numerical solution (Corollary~\ref{cor:momentsL2}) and the exponential moment estimate~\eqref{eq:condition_mu-tau0} for $S_n$, then yield
(for a constant $C$ that does not depend on $R$)
\[
\E\left[e_n^r(1-\chi_{n,R})\right]\le C\e^{-\mu R}.
\]
On the other hand, let $q=pr$ for $p$ introduced above. Applying the Cauchy--Schwarz inequality yields
\begin{align*}
\E[e_n^r\chi_{n,R}]=\E[e_n^r\e^{-rK_FS_n}\e^{rK_FS_n}\chi_{n,R}]\le
\left(\E\left[e_n^q\e^{-qK_FS_n}\right]\right)^{\frac1p}\left(\E\left[\e^{\frac{rK_Fp}{p-1}S_n}\chi_{n,R}\right]\right)^{1-\frac1p}.
\end{align*}
Using Proposition~\ref{prop:conv1/2} with $\sigma=1$, or Proposition~\ref{prop:conv1} with $\sigma=2$, and the relation $q=pr$, for the first factor one has
\[
\left(\E\left[e_n^q\e^{-qK_FS_n}\right]\right)^{\frac1p}\le C\tau^{\frac{r\sigma}{2}}.
\]
For the second factor, using the exponential moment estimates and the upper bound $S_n\le R$ when $\chi_{n,R}\neq 0$, one obtains
\begin{align*}
\left(\E\left[ \e^{\frac{rK_Fp}{p-1}S_n}\chi_{n,R}\right]\right)^{1-\frac1p}&\le
\left(\E\left[\e^{\overline{\mu}S_n}\right]\exp(\max(0,\frac{rK_Fp}{p-1}-\overline{\mu})R)\right)^{1-\frac1p}\\
&\le C \exp\left(\max(0,rK_F-\mu)R\right),
\end{align*}
using the identity $\mu=(1-\frac{1}{p})\overline{\mu}$.

Finally, for all $R\in(0,\infty)$, one has
\[
\E[e_n^r]\le C\left(\tau^{\frac{r\sigma}{2}}\exp\bigl(\max(0,rK_F-\mu)R\bigr)+\exp(-\mu R)\right).
\]
It remains to optimize the choice of $R$ in terms of $\tau$. If $rK_F\le \mu$, there is no condition and passing to the limit $R\to\infty$ yields $\E[e_n^r]\le C\tau^{\frac{r\sigma}{2}}$. If $rK_F>\mu$, the right-hand side is minimized when
$\tau^{\frac{r\sigma}{2}}\e^{rK_F R}=1$, {\it i.\,e.} $\e^{-R}=\tau^{\frac{r\sigma}{2K_F}}$ and one obtains
\[
\E[e_n^r]\le C\tau^{\frac{r\sigma\mu}{2K_F}}.
\]
This concludes the proof of the theorem.
\end{proof}

To conclude this section, let us state results concerning convergence in probability,
with order of convergence equal to $\frac{\sigma}{2}$, and almost sure convergence with order
of convergence $\frac{\sigma}{2}-\varepsilon$ for all $\varepsilon\in(0,\frac12)$, with $\sigma\in\{1,2\}$.

\begin{corollary}\label{cor:pas}
Consider the stochastic Schr\"odinger equation \eqref{sse} on the time interval $[0,T]$ with solution denoted by $u(t)$.
Let $u_n$ be the numerical solution given by the splitting scheme \eqref{S1} with time-step size $\tau$.
Under the assumptions of Theorem~\ref{th:conv}, one has convergence in probability of order $\frac{\sigma}{2}$
\[
\underset{C\to\infty}\lim~\mathbb{P}\left(\norm{u_N-u(T)}_{L^2}\ge C\tau^{\frac{\sigma}{2}}\right)=0,
\]
where $T=N\tau$.

Moreover, consider the sequence of time-step sizes given by $\tau_M=\frac{T}{2^M}$, $M\in\N$.
Then, for every $\varepsilon\in(0,\frac{\sigma}{2})$, there exists an almost surely finite random variable
$C_\varepsilon$, such that for all $M\in\N$ one has
\[
\norm{u_{2^M}-u(T)}_{L^2}\le C_{\varepsilon}\left(\frac{T}{2^M}\right)^{\frac{\sigma}{2}-\varepsilon}.
\]
\end{corollary}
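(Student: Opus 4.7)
Both statements will be deduced from Theorem~\ref{th:conv} by combining it with Markov's inequality, and, for the almost-sure part, the Borel--Cantelli lemma along the dyadic subsequence.

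The plan is, first, to fix $\overline{\mu}$ and $\tau_0$ so that~\eqref{eq:condition_mu-tau0} holds (this is granted by Remark~\ref{rem:expm} under Assumption~\ref{as1} or~\ref{as2}), and then to choose $\mu\in(0,\overline{\mu})$. For any chosen exponent $r\in(0,\infty)$ with $rK_F<\mu$, Theorem~\ref{th:conv} gives
\[
\sup_{0\le n\tau\le T}\left(\E\bigl[\norm{u_n-u(t_n)}_{L^2}^{r}\bigr]\right)^{1/r}\le C_r(T,u_0,\alpha,Q,\tau_0,\mu)\,\tau^{\sigma/2}.
\]
In particular, since $K_F$ is fixed, \textbf{any sufficiently small} $r>0$ provides such an estimate with the full order $\sigma/2$; this is the mechanism that unlocks both claims.

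For the convergence in probability, applying the Markov inequality with such an $r$ yields, for $T=N\tau$,
\[
\mathbb{P}\bigl(\norm{u_N-u(T)}_{L^2}\ge C\tau^{\sigma/2}\bigr)\le \frac{\E[\norm{u_N-u(T)}_{L^2}^{r}]}{C^{r}\tau^{r\sigma/2}}\le \frac{C_r^{\,r}}{C^{r}},
\]
which tends to $0$ as $C\to\infty$ uniformly in $\tau\in(0,\tau_0)$; this is the first assertion.

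For the almost sure assertion, fix $\varepsilon\in(0,\sigma/2)$ and take the same $r$ as above (small enough that $rK_F<\mu$). Applying Markov again with the choice $\tau=\tau_M=T/2^M$ yields
\[
\mathbb{P}\Bigl(\norm{u_{2^M}-u(T)}_{L^2}\ge \tau_M^{\sigma/2-\varepsilon}\Bigr)\le \frac{C_r^{\,r}\,\tau_M^{r\sigma/2}}{\tau_M^{r(\sigma/2-\varepsilon)}}=C_r^{\,r}\,\tau_M^{r\varepsilon}=C_r^{\,r}\,\Bigl(\frac{T}{2^M}\Bigr)^{r\varepsilon}.
\]
Since $r\varepsilon>0$, this series converges in $M$; by the Borel--Cantelli lemma, almost surely only finitely many of the events $\{\norm{u_{2^M}-u(T)}_{L^2}\ge \tau_M^{\sigma/2-\varepsilon}\}$ occur. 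Hence the random variable $C_\varepsilon:=\sup_{M\in\N}\norm{u_{2^M}-u(T)}_{L^2}/\tau_M^{\sigma/2-\varepsilon}$ is almost surely finite, which is the claimed bound.

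The only delicate point — and it is a bookkeeping one rather than a genuine obstacle — is the order of quantifiers: $\varepsilon$ is chosen first, then one selects $r$ small enough (depending on $\mu$ and $K_F$) so that Theorem~\ref{th:conv} produces the clean rate $\sigma/2$ rather than the degraded rate $\frac{\sigma}{2}\min(1,\mu/(rK_F))$. No further estimates are needed; all the analytic work has already been carried out in Theorem~\ref{thm-expm} (exponential moments) and Theorem~\ref{th:conv} (strong convergence with discounting removed).
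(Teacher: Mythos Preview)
Your proof is correct and follows essentially the same approach as the paper: choose $r$ small enough that Theorem~\ref{th:conv} delivers the full rate $\sigma/2$, then apply Markov's inequality for the convergence in probability, and for the almost-sure part exploit summability along the dyadic sequence. The only cosmetic difference is that for the almost-sure claim the paper argues directly that $\sum_m \E[\norm{u_{2^m}-u(T)}_{L^2}^r]/\tau_m^{r(\sigma/2-\varepsilon)}<\infty$ forces the general term to vanish almost surely, whereas you route this through Markov and Borel--Cantelli; the two arguments are equivalent.
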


\begin{proof}
Let $r$ be chosen sufficiently small, such that applying Theorem~\ref{th:conv} yields
\[
\E\left[\norm{u_N-u(T)}_{L^2}^r\right]\le C(r,T)\tau^{\frac{r\sigma}{2}}.
\]
Then the convergence in probability result is a straightforward consequence of Markov's inequality:
\begin{align*}
\mathbb{P}\left(\norm{u_N-u(T)}_{L^2}\ge C\tau^{\frac{\sigma}{2}}\right)&=
\mathbb{P}\left(\norm{u_N-u(T)}_{L^2}^r\ge C^r\tau^{\frac{r\sigma}{2}}\right)\\
&\le
\frac{\E\left[\norm{u_N-u(T)}_{L^2}^r\right]}{C^r\tau^{\frac{r\sigma}{2}}}=\frac{C(r,T)}{C^r}\underset{C\to\infty}\to 0.
\end{align*}
To get the almost sure convergence result, it suffices to observe that (again by applying Theorem~\ref{th:conv})
\[
\sum_{m=0}^{\infty}\frac{\E\left[\norm{u_{2^{m}}-u(T)}_{L^2}^r\right]}{\tau_{m}^{r(\frac{\sigma}{2}-\varepsilon)}}<\infty,
\]
thus $\frac{\norm{u_{2^M}-u(T)}_{L^2}^r}{\tau_M^{r(\frac{\sigma}{2}-\varepsilon)}}\underset{M\to\infty}\to 0$ almost surely.
\end{proof}

We do not know whether the rates of convergence in Theorem~\ref{th:conv} are optimal, indeed our arguments have limitations due to the use of exponential moment bounds from Theorem~\ref{thm-expm} which may not be optimal. In particular, the size $\alpha$ of the noise and the length $T$ of the time interval have some influence, which may be unexpected.%We agree with the referee that it would be better to avoid such restrictions, but this does not seem to be within reach with the approach proposed in the article. Improving our main convergence result may be left for future works.

%See also:
Like in Theorem~\ref{th:conv}, in the proof of Corollary~\ref{cor:pas} it is possible to choose arbitrarily small positive parameters $r$, with the same care in the interpretation when $r\le 1$. This is an important result and the reason why we are able to obtain orders of convergence $1/2$ or $1$ (depending on the value of the regularity parameter $\sigma$) for the convergence in probability and the almost sure convergence, even if in average or mean-square sense one may have lower orders of convergence.

%A sentence was added in the introduction to clarify the importance of this corollary (in order to answer a previous question by the referee).

\section{Proofs of technical results}\label{sec-proofs}

This section is devoted to giving the proofs to Theorem~\ref{thm-expm} and Propositions~\ref{prop:conv1/2}~and~\ref{prop:conv1}.

To simplify notation, we let $Q_\alpha=\alpha^2 Q$, where we recall that $Q$ is the covariance operator of the noise in the SPDE~\eqref{sse}.
\subsection{Proof of Theorem~\ref{thm-expm}}
We start with the proof of Theorem~\ref{thm-expm}.

\begin{proof}
Set $\lambda=\frac{1}{2T{\rm Tr}(Q_\alpha)}$ and define the stochastic process
$X(t)=\e^{-t/T}\norm{u(t)}_{L^2}^{2}$.
An application of It\^o's formula gives
\begin{align*}
\mathrm{d}\left(\e^{\lambda X(t)}\right)&=
\e^{\lambda X(t)}\left(-\lambda/T X(t)\,\mathrm{d}t+\lambda\e^{-t/T}{\rm Tr}(Q_\alpha)\,\mathrm{d}t
+\frac{\lambda^2}2\mathrm{d}\langle X\rangle_t\right)\\
&\quad+2\lambda\e^{\lambda X(t)}\e^{-t/T}\langle u(t),\mathrm{d}W^{Q_\alpha}(t)\rangle,
\end{align*}
where the quadratic variation $\langle X\rangle_t$ satisfies
$$
\mathrm{d}\langle X\rangle_t\leq\e^{-2t/T}4{\rm Tr}(Q_\alpha)\norm{u(t)}_{L^2}^2\,\mathrm{d}t
\leq4{\rm Tr}(Q_\alpha)\e^{-t/T}X(t)\,\mathrm{d}t.
$$
Taking expectation in the first equation above and observing that $X(t)\geq0$ a.s, one gets
\begin{align*}
\frac{\mathrm{d}\E[\e^{\lambda X(t)}]}{\mathrm{d}t}&\leq\lambda{\rm Tr}(Q_\alpha)\E[\e^{\lambda X(t)}]
+\E[\e^{\lambda X(t)}\left(2\lambda^2{\rm Tr}(Q_\alpha)-\lambda/T\right)X(t)]\\
&\leq\lambda{\rm Tr}(Q_\alpha)\E[\e^{\lambda X(t)}]
\end{align*}
by definition of $\lambda$.

By definition of the stochastic process $X(t)$, the above reads
$$
\frac{\mathrm{d}\E\left[\exp\left(\lambda\e^{-t/T}\norm{u(t)}_{L^2}^2 \right)\right]}{\mathrm{d}t}
\leq\lambda{\rm Tr}(Q_\alpha)\E\left[\exp\left(\lambda\e^{-t/T}\norm{u(t)}_{L^2}^2 \right)\right]
$$
and applying Gronwall's lemma provides the following estimate
$$
\E\left[\exp\left(\lambda\e^{-t/T}\norm{u(t)}_{L^2}^2\right)\right]\leq
\exp\left(\lambda\norm{u_0}_{L^2}^2\right)\e^{\lambda{\rm Tr}(Q_\alpha)t},
$$
Finally, let $\mu\le \frac{\e^{-1}}{2T{\rm Tr}(Q_\alpha)}=\e^{-1}\lambda$. Then for all $t\in[0,T]$,
\begin{align*}
\E\left[\exp\left(\mu\norm{u(t)}_{L^2}^2\right)\right]&\le \E\left[\exp\left(\lambda\e^{-t/T}\norm{u(t)}_{L^2}^2\right)\right]\\
&\le \exp\left(\lambda\norm{u_0}_{L^2}^2\right)\e^{\lambda{\rm Tr}(Q_\alpha)T},
\end{align*}
where we recall that $\lambda=\frac{1}{2T{\rm Tr}(Q_\alpha)}=\frac{1}{2\alpha^2T{\rm Tr}(Q)}$. This concludes the proof of the exponential moment estimates for the exact solution of the stochastic Schr\"odinger equation \eqref{sse}.

Let us now prove the exponential moment estimates for the numerical solution \eqref{S1}. Let $p,q>1$ such that $1/p+1/q=1$, and set $\lambda=\frac{1}{2Tp{\rm Tr}(Q_\alpha)}$. Define $r_n=\lambda \exp(-\frac{n}{N})$ for $n=1,\ldots,N$, where $N\tau=T$, and introduce the filtration $\mathcal{F}_n=\sigma\{\delta W_k^{Q_\alpha}; k\le n-1\}$. Note that $u_n$ is $\mathcal{F}_n$-measurable. Let also $\tau^\star\in(0,p(p-1))$.

Using the definition of the scheme \eqref{S1} and H\"older's inequality, one has
\begin{align*}
\E\bigl[\exp\bigl(&r_{n+1}\norm{u_{n+1}}_{L^2}^2\bigr)~|~\mathcal{F}_n\bigr]\\
&\le \E[\exp\left(r_{n+1}\norm{u_n}_{L^2}^2\right)]
\left(\E[\exp(2pr_{n+1}{\rm Im}(\langle \Phi_{\tau}(u_n),\delta W_n^{Q_\alpha}\rangle))~|~\mathcal{F}_n]\right)^{\frac1p}\\
&\left(\E[\exp\left(qr_{n+1}\norm{\delta W_n^{Q_\alpha}}_{L^2}^2\right)]\right)^{\frac1q}.
\end{align*}
On the one hand, since $\delta W_n^{{Q_\alpha}}$ is a centered Gaussian random variable and by definition of $r_n$, one has
\begin{align*}
\E\left[\exp\left(qr_{n+1}\norm{\delta W^{{Q_\alpha}}_n}_{L^2}^2\right)\right]&
\le\left(1-2qr_{n+1}\E\left[\norm{\delta W_n^{{Q_\alpha}}}_{L^2}^2\right]\right)^{-\frac12}\\
&\le \left(1-2q\lambda\tau{\rm Tr}(Q_\alpha)\right)^{-\frac12},
\end{align*}
under the condition that $\tau<\frac{1}{2q\lambda {\rm Tr}(Q_\alpha)}=\frac{p^2}{q}$. This condition thus holds when $\tau<\tau^\star$.

On the other hand, conditional on $\mathcal{F}_n$, the random variable
$\langle \Phi_{\tau}(u_n),\delta W_n^{{Q_\alpha}}\rangle$ is also Gaussian and centered, thus
\begin{align*}
\E\left[\exp\left(2 pr_{n+1}{\rm Im}(\langle \Phi_{\tau}(u_n),\delta W^{{Q_\alpha}}_n\rangle)\right)~|~\mathcal{F}_n\right]
&\le \exp\left(2 p^2r_{n+1}^2{\rm Var}[\langle \Phi_{\tau}(u_n),\delta W^{{Q_\alpha}}_n\rangle]\right)\\
&\le \exp\left(2p^2\lambda r_{n+1}\tau{\rm Tr}(Q_{\alpha})\norm{u_n}_{L^2}^2\right).
\end{align*}
Gathering these estimates and taking expectation yield
\begin{align*}
\E\left[\exp\left(r_{n+1}\norm{u_{n+1}}_{L^2}^2\right)\right]&\le
\E\left[\exp\left(r_{n+1}(1+2p\lambda \tau{\rm Tr}(Q_\alpha))\norm{u_n}_{L^2}^2\right)\right]\\
&\left(1-2q\lambda\tau{\rm Tr}(Q_{\alpha})\right)^{-\frac{1}{2q}}.
\end{align*}
Having chosen $\lambda=\frac{1}{2pT{\rm Tr}(Q_\alpha)}$, one then gets $r_{n+1}(1+2p\lambda \tau{\rm Tr}(Q_\alpha))=r_n\e^{-\frac{\tau}{T}}(1+\frac{\tau}{T})\le r_n$.

A recursion on $n$ then gives the following estimate
\[
\underset{0\le n\tau\le T}\sup~\E\left[\exp\left(r_{n}\norm{u_{n}}_{L^2}^2\right)\right]\le \exp(\lambda\norm{u_{0}}_{L^2}^2)(1-2q\lambda\tau{\rm Tr}(Q_\alpha))^{-\frac{N}{2q}}\le C(\lambda,u_0)<\infty,
\]
for $\tau<\tau^\star$, where the quantity $C(\lambda,u_0)$ does not depend on $\tau$.

We are now in position to conclude the proof of exponential moments estimates for the numerical solution. Let $\mu$ such that $\mu <\frac{\e^{-1}}{2T{\rm Tr}(Q_\alpha)}$. Note that $r_N=\lambda \e^{-1}$, thus there exists $p>1$ such that $\mu\le r_N\le r_n$ for all $n\in\{0,\ldots,N\}$. This then implies that
\[
\underset{0\le n\tau\le T}\sup~\E\left[\exp\left(\mu \norm{u_n}_{L^2}^2\right)\right]\le C(\mu,T,Q,u_0)<\infty,
\]
for all $\tau\in(0,\tau^\star)$.

This concludes the proof of Theorem~\ref{thm-expm}.
\end{proof}

\subsection{Proofs of Propositions~\ref{prop:conv1/2} and~\ref{prop:conv1}}

Before we start with these proofs, it is convenient to introduce some auxiliary notation and provide the steps
that are common for both proofs. Define $w(t)=\displaystyle -\alpha\ii\int_{0}^{t}S(t-s)\,\dd W^Q(s)$ for all $t\ge 0$ and $w_n=\displaystyle-\alpha\ii\sum_{k=0}^{n-1}S(\tau)^{n-k}\delta W_k^Q$ for all $n\ge 0$. Introduce also $v(t)=u(t)-w(t)$ and $v_n=u_n-w_n$. Let $t_k=k\tau$. Recall that $S_n=\displaystyle\tau\sum_{k=0}^{n-1}\left(\norm{u(k\tau)}_{L^2}^2+\norm{u_k}_{L^2}^2\right)$.

Define $\epsilon_n=\norm{v(t_n)-v_n}_{L^2}$ and $e_n=\norm{u(t_n)-u_n}_{L^2}$.
Then the error between the numerical and exact solution reads $e_n\le \epsilon_n+\norm{w_n-w(t_n)}_{L^2}$.

Let us first deal with the error term $\norm{w_n-w(t_n)}_{L^2}$ for the stochastic convolution:
employing the It\^o isometry formula, with $\sigma=1$ (resp. $\sigma=2$) if Assumption~\ref{as1} (resp. Assumption~\ref{as2}) is satisfied, one has
\begin{align*}
\E\left[\norm{w_n-w(t_n)}_{L^2}^2\right]&=\alpha^2\E\left[\norm{\sum_{k=0}^{n-1}\int_{t_k}^{t_{k+1}}
\left(S(\tau)^{n-k}-S(t_{n}-t)\right)\,\dd W^Q(t)}_{L^2}^2\right]\\
&=\alpha^2\sum_{k=0}^{n-1}\int_{t_k}^{t_{k+1}}\norm{\left(S(\tau)^{n-k}-S(t_{n}-t)\right)Q^{\frac12}}_{\mathcal{L}_2^0}^2\,\dd t\\
&\le \alpha^2\sum_{k=0}^{n-1}\int_{t_k}^{t_{k+1}}\left|t-t_{k}\right|^\sigma\,\dd t\norm{Q^\frac12}_{\mathcal{L}_2^\sigma}^2\le C(T,\alpha,Q)\tau^\sigma,
\end{align*}
using properties of the semigroup $S$. Since the distribution of $w_n-w(t_n)$ is Gaussian, for every $q\in[1,\infty)$, there exists $C_q(T,Q)\in(0,\infty)$ such that one has
\begin{equation}\label{eq:stoconv}
\E[\norm{w_n-w(t_n)}_{L^2}^q]\le C_q(T,\alpha,Q)\tau^{\frac{q\sigma}{2}}.
\end{equation}

It remains to treat the error term $\epsilon_n=\norm{v_n-v(t_n)}_{L^2}$.

Using the mild formulation~\eqref{mild} of the solution $u(t_n)$ and the definition of
the splitting scheme~\eqref{S1} for $u_n$, one obtains
\begin{equation}\label{eq:errordecomp}
\begin{aligned}
v_{n+1}-v(t_{n+1})&=\left(S(\tau)v_n-\ii\int_{t_n}^{t_{n+1}}S(\tau)F(\Phi_{t-t_n}(u_n))\,\dd t\right)\\
&\quad-\left( S(\tau)v(t_n)-\ii\int_{t_n}^{t_{n+1}}S(t_{n+1}-t)F(u(t))\,\dd s\right)\\
&=S(\tau)\left(v_n-v(t_n)\right)-\ii\int_{t_n}^{t_{n+1}}\left( S(\tau)F(\Phi_{t-t_n}(u_n))-S(t_{n+1}-t)F(u(t)) \right)\,\dd t\\
&=S(\tau)\left(v_n-v(t_n)\right)+E_n^1+E_n^2+E_n^3+E_n^4,
\end{aligned}
\end{equation}
where
\begin{align*}
E_n^1&=\ii\int_{t_n}^{t_{n+1}}\left( S(t_{n+1}-t)-S(\tau)\right)F(u(t))\,\dd t\\
E_n^2&=\ii\int_{t_n}^{t_{n+1}}S(\tau)\left( F(u(t))-F(u(t_n)) \right)\,\dd t\\
E_n^3&=\ii\tau S(\tau)\left( F(u(t_n))-F(u_n) \right)\\
E_n^4&=\ii\int_{t_n}^{t_{n+1}}S(\tau)\left( F(u_n)-F(\Phi_{t-t_n}(u_n)) \right)\,\dd t.
\end{align*}

For the first term, using properties of the semigroup $S$ (see Lemma~\ref{lemma1}), for $\sigma\in\{1,2\}$, one has
\[
\norm{E_n^1}_{L^2}\le C\tau^{\frac{\sigma}{2}}\int_{t_n}^{t_{n+1}}\norm{F(u(t))}_{H^\sigma}\,\dd t.
\]
The treatment of the second term $E_n^2$ is different for the two propositions, details are provided below.

For the third term, recall that $\norm{u_n-u(t_n)}_{L^2}\le \epsilon_n+\norm{w_n-w(t_n)}_{L^2}$.
Using~\eqref{eq:KF}, one obtains
\begin{align*}
\norm{E_n^3}_{L^2}&\le \tau\left(C_F+K_F(\norm{u(t_n)}_{L^2}^2+\norm{u_n}_{L^2}^2)\right)\norm{u_n-u(t_n)}_{L^2}\\
&\le \tau\left(C_F+K_F(\norm{u(t_n)}_{L^2}^2+\norm{u_n}_{L^2}^2)\right)\epsilon_n\\
&\quad+\tau\left(C_F+K_F(\norm{u(t_n)}_{L^2}^2
+\norm{u_n}_{L^2}^2)\right)\norm{w_n-w(t_n)}_{L^2}.
\end{align*}
For the fourth term, using~\eqref{eq:KF}, the equality $\norm{\Phi_{t-t_n}(u_n)}_{L^2}=\norm{u_n}_{L^2}$, and Assumption~\ref{as3}, one obtains
\begin{align*}
\norm{E_n^4}_{L^2}&\leq C\int_{t_n}^{t_{n+1}}\left(1+\norm{u_n}^2_{L^2}+\norm{\Phi_{t-t_n}(u_n)}^2_{L^2}\right)\norm{u_n-\Phi_{t-t_n}(u_n)}_{L^2}\,\dd t\\
&\leq C\left(1+2\norm{u_n}_{L^2}^5\right)\int_{t_n}^{t_{n+1}}|t-t_n|\,\dd t\\
&\leq C\tau^2\left(1+\norm{u_n}_{L^2}^5\right).
\end{align*}

At this stage, it is necessary to treat separately the proofs for Proposition~\ref{prop:conv1/2} and~\ref{prop:conv1}.

\begin{proof}[Proof of Proposition~\ref{prop:conv1/2}]

Assume that $\sigma=1$. For the second error term $E_n^2$, using the assumption on $F$ and Cauchy--Schwarz inequality, one has
\[
\norm{E_n^2}_{L^2}^2\le C\int_{t_n}^{t_{n+1}}\left(1+\norm{u(t)}_{L^2}^2+\norm{u(t_n)}_{L^2}^2\right)^2\,\dd t
\int_{t_n}^{t_{n+1}}\norm{u(t)-u(t_n)}_{L^2}^2\,\dd t.
\]

Gathering all the estimates, and using the isometry property $\norm{S(\tau)\left(v_n-v(t_n)\right)}_{L^2}=\norm{v_n-v(t_n)}_{L^2}=\epsilon_n$, from~\eqref{eq:errordecomp} one obtains
\[
\epsilon_{n+1}\le \bigl(1+C_F\tau+K_F\tau\Gamma_n)\epsilon_n+R_n,
\]
where we define $\Gamma_n=\norm{u(t_n)}_{L^2}^2+\norm{u_n}_{L^2}^2$ and $R_n=\norm{E_n^1}_{L^2}+\norm{E_n^2}_{L^2}+
\norm{E_n^4}_{L^2}+K_F\tau\Gamma_n\norm{w_n-w(t_n)}_{L^2}$. Using a discrete Gronwall inequality and the equality $\epsilon_0=0$, one gets for all $n\in\{0,\ldots,N\}$
\[
\exp\left(-C_Fn\tau-K_F\tau\sum_{k=0}^{n-1}\Gamma_k\right)\epsilon_n\le \sum_{k=0}^{n-1}R_k.
\]
Rewriting $\displaystyle\tau\sum_{k=0}^{n-1}\Gamma_k=S_n$ and $\norm{u_n-u(t_n)}_{L^2}\leq\epsilon_n+\norm{w_n-w(t_n)}_{L^2}$,
applying Minkowskii's inequality yields for $q\in[1,\infty)$
\[
\E\left[\exp\left(-qK_FS_n\right)\norm{u_n-u(t_n)}_{L^2}^q\right]^{\frac{1}{q}}\le \e^{C_FT}
\sum_{k=0}^{n-1}\left(\E\left[R_k^q\right]\right)^{\frac{1}{q}}+\e^{C_FT}\left(\E\left[\norm{w_n-w(t_n)}_{L^2}^q\right]\right)^{\frac{1}{q}}.
\]
We now estimate each of the terms above. Let us first recall that $R_k=\norm{E_k^1}_{L^2}+\norm{E_k^2}_{L^2}
+\norm{E_k^4}_{L^2}+K_F\tau\Gamma_k\norm{w_k-w(t_k)}_{L^2}$. Using the triangle inequality, followed by Cauchy--Schwarz's inequality,
the assumption on the nonlinearity $F$ as well as moment estimates in the $L^2$ and $H^1$ norms for the exact solution (Corollary~\ref{cor:momentsL2} and Proposition~\ref{prop:exact}),
one obtains
\begin{align*}
\E\left[\norm{E_k^1}^q_{L^2}\right]^{1/q}
&\leq C\tau^{1/2}\int_{t_k}^{t_{k+1}}\E\left[\norm{F(u(t))}_{H^1}^q\right]^{1/q}\,\dd t\\
&\leq C\tau^{1/2}\int_{t_k}^{t_{k+1}}\E\left[\norm{u(t)}_{H^1}^{2q}\right]^{1/(2q)}
\E\left[(1+\norm{u(t)}_{L^2}^2)^{2q}\right]^{1/(2q)}\,\dd t\\
&\leq C\tau^{1/2}\int_{t_k}^{t_{k+1}}\,\dd t\leq C\tau^{3/2}.
\end{align*}
For the second term, we use Cauchy--Schwarz's inequality and moment bounds and regularity properties of the exact solution
from Proposition~\ref{prop:exact} to get
\begin{align*}
\E\left[\norm{E_k^2}^q_{L^2}\right]^{1/q}
&\leq C  \left( \int_{t_k}^{t_{k+1}}\E\left[ \left(1+\norm{u(t)}_{L^2}^2+\norm{u(t_k)}_{L^2}^2\right)^{2q}\right]^{1/q}\,\dd t \right)^{1/2}\\
&\left( \int_{t_k}^{t_{k+1}}\E\left[\norm{u(t)-u(t_k)}_{L^2}^{2q}\right]^{1/q}\,\dd t \right)^{1/2}\\
&\leq C \tau^{1/2}\left( \int_{t_k}^{t_{k+1}} |t-t_k|\,\dd t\right)^{1/2}\leq C\tau^{3/2}.
\end{align*}
Similarly, using the Cauchy--Schwarz's inequality and the moment estimates in the $L^2$ norm for the numerical solution (Corollary~\ref{cor:momentsL2}), we obtain
\begin{align*}
\E\left[\norm{E_k^4}^q_{L^2}\right]^{1/q}&\leq
C\tau^{2}\E\left[\left(1+2\norm{u_n}_{L^2}^2\right)^{2q}\right]^{1/(2q)}\E\left[\norm{u_n}^{10q}_{L^2}\right]^{1/(2q)}\\
&\leq C\tau^2.
\end{align*}
Thanks to the bounds for the moments in the $L^2$ norm given by Corollary~\ref{cor:momentsL2}, as well as to the error estimate~\eqref{eq:stoconv} for the stochastic convolution proved above, we obtain the estimate
\begin{align*}
\E\left[\left(K_F\tau\Gamma_k\norm{w_k-w(t_k)}\right)^q_{L^2}\right]^{1/q}
&\leq C\tau\E\left[\Gamma_k^{2q}\right]^{1/(2q)}\E\left[\norm{w_k-w(t_k)}_{L^2}^{2q}\right]^{1/(2q)}\\
&\leq C\tau\E\left[\norm{w_k-w(t_k)}_{L^2}^{2q}\right]^{1/(2q)}\leq C\tau\tau^{1/2}\leq C\tau^{3/2}.
\end{align*}
With all these estimates at hand, we arrive at
\[
\sum_{k=0}^{n-1}\left(\E\left[R_k^q\right]\right)^{\frac{1}{q}}\le C_q(T,u_0,\alpha,Q)\tau^{\frac12}.
\]
Finally, we obtain
\begin{align*}
\E\left[\exp\left(-qK_FS_n\right)\norm{u_n-u(t_n)}_{L^2}^q\right]^{\frac{1}{q}}&\le \e^{C_FT}
\sum_{k=0}^{n-1}\left(\E\left[R_k^q\right]\right)^{\frac{1}{q}}+\e^{C_FT}\left(\E\left[\norm{w_n-w(t_n)}_{L^2}^q\right]\right)^{\frac{1}{q}}\\
&\leq C_q(T,u_0,\alpha,Q)\tau^{\frac12}+C_q(T,\alpha,Q)\tau^{\frac12},
\end{align*}
using~\eqref{eq:stoconv} in the last step.

This concludes the proof of Proposition~\ref{prop:conv1/2}.
\end{proof}

We now turn to the proof of the second auxiliary result.

\begin{proof}[Proof of Proposition~\ref{prop:conv1}]
Assume that $\sigma=2$. As explained above, one requires to substantially modify the treatment of the error term $E_n^2$. As will be clear below, some changes in the analysis of the error $\epsilon_n$ are required too.

Using a second-order Taylor expansion of the nonlinearity $F$ and equation \eqref{eq:as2-1} (assumption on $F''$),
one obtains the decomposition $E_n^2=E_n^{2,1}+E_n^{2,2}$ where
\begin{align*}
E_n^{2,1}&=\ii \int_{t_n}^{t_{n+1}}S(\tau)F'(u(t_n)).\bigl(u(t)-u(t_n)\bigr)\,\dd t\\
\norm{E_n^{2,2}}_{L^2}&\le C\int_{t_n}^{t_{n+1}}\left(1+\norm{u(t_n)}_{L^2}+\norm{u(t)}_{L^2}\right)\norm{u(t)-u(t_n)}_{L^2}^2\,\dd t.
\end{align*}
%\ceb{etrangement je ne trouve pas les explications sur le traitement du terme $E_n^{2,2}$, en donner rapidement?}
%\coco{si ca te prend pas top de temps oui, stp. Moi je sais pas quoi rajouter et il faudrait que je relise mes notes \ldots sinon on laisse tomber}

Using the moment and increment bounds in the $L^2$ norm for the exact solution, see Proposition~\ref{prop:exact}, and the Cauchy--Schwarz inequality, one has $\bigl(\E[\norm{E_n^{2,2}}_{L^2}^q]\bigr)^{\frac{1}{q}}\le C\tau^2$.

In addition, using the mild formulation of the exact solution~\eqref{mild},
one has the decomposition $E_n^{2,1}=E_n^{2,1,1}+E_n^{2,1,2}+E_n^{2,1,3}$, where
\begin{align*}
E_n^{2,1,1}&=\ii \int_{t_n}^{t_{n+1}}S(\tau)F'(u(t_n)).\left(S(t-t_n)-I)\right)u(t_n)\,\dd t\\
E_n^{2,1,2}&=\ii \int_{t_n}^{t_{n+1}}S(\tau)F'(u(t_n)).\left(\int_{t_n}^{t}S(t-s)F(u(s))\,\dd s\right)\,\dd t\\
E_n^{2,1,3}&=\ii\alpha \int_{t_n}^{t_{n+1}}S(\tau)F'(u(t_n)).\left(\int_{t_n}^{t}S(t-s)\,\dd W^Q(s)\right)\,\dd t.
\end{align*}
Owing to Lemma~\ref{lemma1} and to equation~\eqref{eq:as2-1} in Assumption~\ref{as2},
the first and second terms above are treated as follows: one has
\[
\norm{E_n^{2,1,1}}_{L^2}\le C\tau^2\left(1+\norm{u(t_n)}_{L^2}^2\right)\norm{u(t_n)}_{H^2},
\]
and
\[
\norm{E_n^{2,1,2}}_{L^2}\le C\left(1+\norm{u(t_n)}_{L^2}^2\right)\tau \int_{t_n}^{t_{n+1}}\norm{F(u(s))}_{L^2} \,\dd s.
\]
Using the stochastic Fubini Theorem, the third term is written as
\begin{align*}
E_n^{2,1,3}&=\ii\alpha \int_{t_n}^{t_{n+1}}S(\tau)F'(u(t_n)).\bigl(\int_{t_n}^{t}S(t-s)\,\dd W^Q(s)\bigr)\,\dd t\\
&=\ii\alpha \int_{t_n}^{t_{n+1}}\left(S(\tau)F'(u(t_n)).\int_{s}^{t_{n+1}}S(t-s)\,\dd t\right)\,\dd W^Q(s)\\
&=\ii\alpha \int_{t_n}^{t_{n+1}}\Theta_n(s)\,\dd W^Q(s),
\end{align*}
where we have defined the quantity $\displaystyle\Theta_n(s)=S(\tau)F'(u(t_n)).\int_{s}^{t_{n+1}}S(t-s)\,\dd t$.

Applying It\^o's formula, one gets
\begin{equation}\label{eq:En213}
\E\left[\norm{E_n^{2,1,3}}_{L^2}^2\right]=\alpha^2\int_{t_{n}}^{t_{n+1}}\E\left[\norm{\Theta_n(s)Q^{\frac12}}_{\mathcal{L}_2^0}^2\right]\,\dd s\le C\tau^3
\end{equation}
using again~\eqref{eq:as2-1} from Assumption~\ref{as2} and the moment estimates in the $L^2$ norm
of the exact solution from Corollary~\ref{cor:momentsL2}.

However the estimate~\eqref{eq:En213} is not sufficient to directly obtain the required error estimate for $\epsilon_n$ as in the proof of Proposition~\ref{prop:conv1/2}.
Improving this estimate requires to modify the approach used above to deal with this error term.
% Improving this estimate requires to exploit the orthogonality property
% $\E\left[\langle E_n^{2,1,3},E_m^{2,1,3}\rangle\right]=0$ if $n\neq m$,
% and to modify the approach used above to deal with the error.

Starting from~\eqref{eq:errordecomp}, one obtains for all $n\ge 0$
\[
v_n-v(t_n)=\sum_{k=0}^{n-1}S(\tau)^{n-k-1}\left(E_k^1+E_k^2+E_k^3+E_k^4\right).
\]
Recalling the decomposition $E_k^{2,1}=E_k^{2,1,1}+E_k^{2,1,2}+E_k^{2,1,3}$ and
using the above bounds for the term $E_k^3$
then yields
\begin{align*}
\epsilon_n&\le \tau\sum_{k=0}^{n-1}\left(C_F+K_F\Gamma_k\right)\epsilon_k\\
&+\tau\sum_{k=0}^{n-1}\left(C_F+K_F\Gamma_k\right)\norm{w(t_k)-w_k}_{L^2}\\
&+\sum_{k=0}^{n-1}\left(\norm{E_k^1}_{L^2}+\norm{E_k^4}_{L^2}\right)\\
&+\sum_{k=0}^{n-1}\norm{E_k^2-E_k^{2,1,3}}_{L^2}+\norm{\sum_{k=0}^{n-1}S(\tau)^{n-1-k}E_k^{2,1,3}}_{L^2}.
\end{align*}
Applying the Gronwall inequality to get an almost sure inequality, then using the Cauchy--Schwarz
and Minkowskii's inequalities, one obtains for all $n\ge 0$ and all $q\in[1,\infty)$
\begin{align*}
\e^{-C_Fn\tau}\left(\E[\e^{-qK_FS_n}\epsilon_n^q]\right)^{1/q}&\le \tau\sum_{k=0}^{n-1}
\left(\E\left[\left(C_F+K_F\Gamma_k\right)^{2q}\right]\right)^{1/(2q)}\left(\E\left[\norm{w(t_k)-w_k}_{L^2}^{2q}\right]\right)^{1/(2q)}\\
&\quad+\sum_{k=0}^{n-1}\left(\left(\E\left[\norm{E_k^1}_{L^2}^q\right]\right)^{1/q}+\left(\E\left[\norm{E_k^4}_{L^2}^q\right]\right)^{1/q}\right)\\
&\quad+\sum_{k=0}^{n-1}\left(\E\left[\norm{E_k^2-E_k^{2,1,3}}_{L^2}^{q}\right]\right)^{1/q}\\
&\quad+\left(\E\left[\norm{\sum_{k=0}^{n-1}S(\tau)^{n-1-k}E_k^{2,1,3}}_{L^2}^{2q}\right]\right)^{1/(2q)},
\end{align*}
where in the last term, we have used the inclusion $L^{2q}(\Omega)\subset L^{q}(\Omega)$.
We recall that the term $E_k^2-E_k^{2,1,3}$ can be written as
\[
E_k^2-E_k^{2,1,3}=E_k^{2,1,1}+E_k^{2,1,2}+E_k^{2,2}.
\]
Using the same arguments as in the proof of Proposition~\ref{prop:conv1/2} (in particular moment estimates of Proposition~\ref{prop:exact} and Corollary~\ref{cor:momentsL2}, and the error estimate~\eqref{eq:stoconv} for the stochastic convolution), the treatment of the error terms in the right-hand side is straightforward,
except for the last one which requires more details that we now present.

One has the identity
\[
\sum_{k=0}^{n-1}S(\tau)^{n-1-k}E_k^{2,1,3}=\sum_{k=0}^{n-1}\int_{t_k}^{t_{k+1}}S(\tau)^{n-1-k}\ii\alpha \Theta_k(s)\,\dd W^Q(s)
\]
and using the Burkholder--Davis--Gundy inequality one then obtains
\begin{align*}
&\left(\E\left[\norm{\sum_{k=0}^{n-1}S(\tau)^{n-1-k}E_k^{2,1,3}}_{L^2}^{2q}\right]\right)^{1/(2q)}\\
&\le C_q \left(\E\left[\Bigl(\sum_{k=0}^{n-1}\alpha^2\int_{t_k}^{t_{k+1}}\norm{S(\tau)^{n-1-k}\Theta_k(s)Q^{\frac12}}_{\mathcal{L}_2^0}^2 \,\dd s\Bigr)^q \right]\right)^{1/(2q)}\le C\tau,
\end{align*}
where the last upper bound follows from the definition of $\Theta_k(s)$, from~Assumption~\ref{as2} and from the moment bounds in the $L^2$ norm for the exact solution, see Proposition~\ref{prop:exact}.

%First, note that using the Burkholder--Davis--Gundy inequality one has
%\[
%\left(\E\left[\norm{\sum_{k=0}^{n-1}S(\tau)^{n-1-k}E_k^{2,1,3}}_{L^2}^{2q}\right]\right)^{1/(2q)}\le C_q
%\left(\E\left[\norm{\sum_{k=0}^{n-1}S(\tau)^{n-1-k}E_k^{2,1,3}}_{L^2}^{2}\right]\right)^{1/2},
%\]
%thus it is sufficient to deal with the case $q=1$. Second, one has
%\[
%\E\left[\norm{\sum_{k=0}^{n-1}S(\tau)^{n-1-k}E_k^{2,1,3}}_{L^2}^{2}\right]=
%\sum_{k=0}^{n-1}\E\left[\norm{E_k^{2,1,3}}_{L^2}^{2}\right]\le C\tau^2,
%\]
%using the orthogonality property $\E\left[\langle E_n^{2,1,3},E_m^{2,1,3}\rangle\right]=0$ if $n\neq m$ mentioned above, and the estimate~\eqref{eq:En213}.

Finally, recalling that $\norm{u_n-u(t_n)}_{L^2}\le \epsilon_n+\norm{w_n-w(t_n)}_{L^2}$,
gathering all these estimates and using the bounds on the error in the stochastic convolution~\eqref{eq:stoconv},
we obtain
\begin{align*}
\E\left[\exp\left(-qK_FS_n\right)\norm{u_n-u(t_n)}_{L^2}^q\right]^{\frac{1}{q}}&\leq C_q(T,u_0,Q)\tau.
\end{align*}

This concludes the proof of Proposition~\ref{prop:conv1}.

\end{proof}

\section{Numerical experiments}\label{sec-num}
We present some numerical experiments in order to support and illustrate
the above theoretical results. In addition, we shall compare
the behavior of the splitting scheme \eqref{S1}
(denoted by \textsc{Split} below) with the following time integrators
\begin{itemize}
\item the classical Euler--Maruyama scheme (denoted \textsc{EM})
$$
u_{n+1}=u_n-\ii\tau\Delta u_n-\ii\tau F(u_n)-\ii\alpha\delta W_n^Q.
$$
\item the classical semi-implicit Euler--Maruyama scheme (denoted \textsc{sEM})
$$
u_{n+1}=u_n-\ii\tau\Delta u_{n+1}-\ii\tau F(u_n)-\ii\alpha\delta W_n^Q.
$$
\item the stochastic exponential integrator from \cite{MR3771721} (denoted \textsc{sEXP})
$$
u_{n+1}=S(\tau)\left(u_n-\ii\tau F(u_n)-\ii\alpha\delta W_n^Q\right).
$$
\item the Crank--Nicolson--Euler--Maruyama (denoted \textsc{CN})
$$
u_{n+1}=u_n-\ii{\tau}\Delta u_{n+1/2}-\ii\tau F(u_{n})-\ii\alpha\delta W_n^Q,
$$
where $u_{n+1/2}=\frac12\left(u_n+u_{n+1}\right)$. This is a slight modification
of the Crank--Nicolson from \cite{MR2268663}.
\end{itemize}

\subsection{Trace formulas for the mass}
We consider the stochastic Schr\"odinger equation \eqref{sse} on the interval $[0,2\pi]$
with periodic boundary condition, the coefficient $\alpha=1$,
and a covariance operator with
$\left(\gamma_k\right)_{k\in\mathbb{Z}}=\left(\frac1{1+k^2}\right)_{k\in\mathbb{Z}}$
and $\left(e_k(x)\right)_{k\in\mathbb{Z}}=\left(\frac1{\sqrt{2\pi}}\e^{\ii  k x}\right)_{k\in\mathbb{Z}}$.
We consider the initial value $u_0=\frac{2}{2-\cos(x)}$
and the following nonlinearities:
$V(x)u=\frac{3}{5-4\cos(x)}u$ (external potential),
$\left(V\star|u|^2\right)u$ with $V(x)=\cos(x)$ (nonlocal interaction), $F(u)=+|u|^2u$ (cubic).
We refer to \cite[Theorem 3.4]{MR1954077} for a result on global existence of solutions
to the cubic case. We use a pseudo-spectral method with $N_x=2^8$ modes and
the above time integrators with time-step size $\tau=0.1$.

Figure~\ref{fig:mass} displays the evolution of the expected value of the mass on the time
intervals $[0,1]$ (external potential) and $[0,25]$ (other cases).
The expected values are approximated using $M=75000$ samples.
The exact trace formulas for the splitting scheme,
shown in Proposition~\ref{prop:mass}, can
be observed. The growth rates of the other schemes
are qualitatively different than this linear rate of the exact solution: observe for
instance the exponential drift of \textsc{EM} in the first plot,
the fact that \textsc{sEXP} seems to overestimate the linear drift and
the fact that \textsc{sEM} underestimates it.
The \textsc{CN} scheme performs relatively well,
except in the cubic case (not displayed),
where it should use a much smaller step-size in order not to explode.

\begin{figure}[h]
\begin{subfigure}{.49\textwidth}
  \centering
  \includegraphics[width=.67\linewidth]{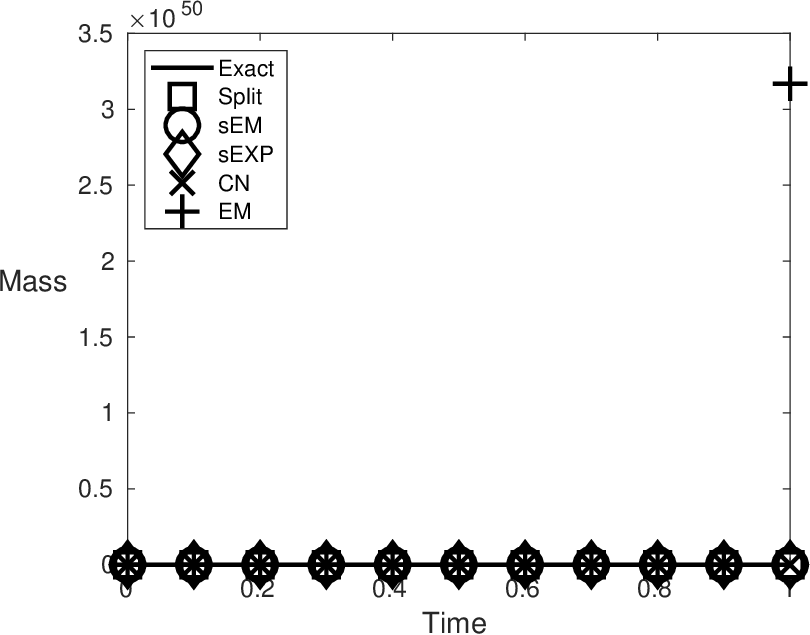}
  \caption{External potential}
\end{subfigure}%
\begin{subfigure}{.49\textwidth}
  \centering
  \includegraphics[width=.67\linewidth]{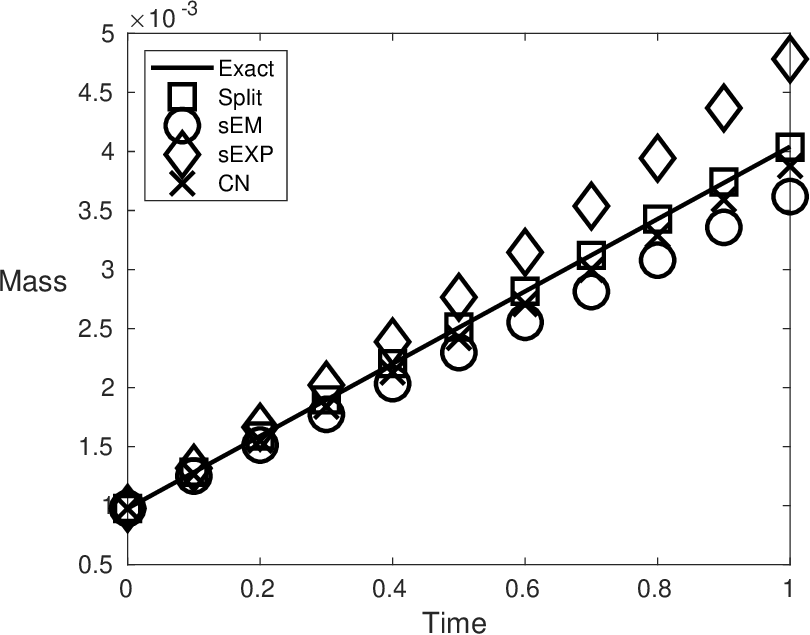}
  \caption{External potential}
\end{subfigure}
\begin{subfigure}{.49\textwidth}
  \centering
  \includegraphics[width=.67\linewidth]{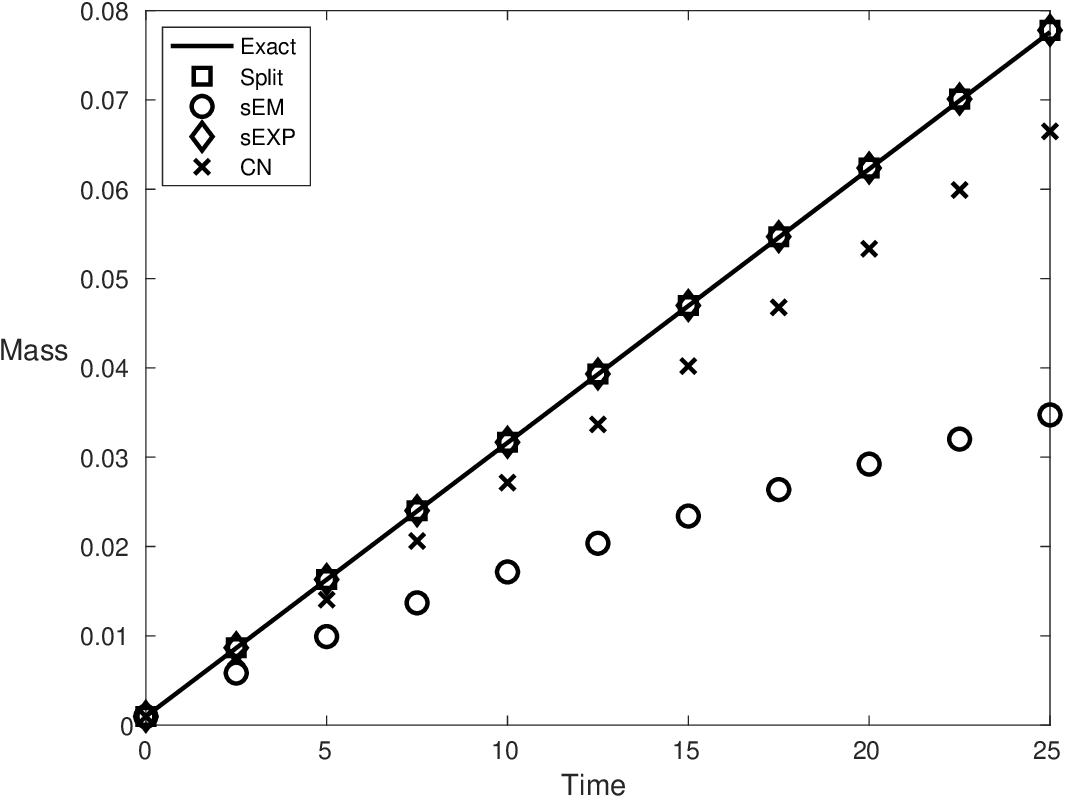}
  \caption{Nonlocal interaction}
\end{subfigure}
\begin{subfigure}{.49\textwidth}
  \centering
  \includegraphics[width=.67\linewidth]{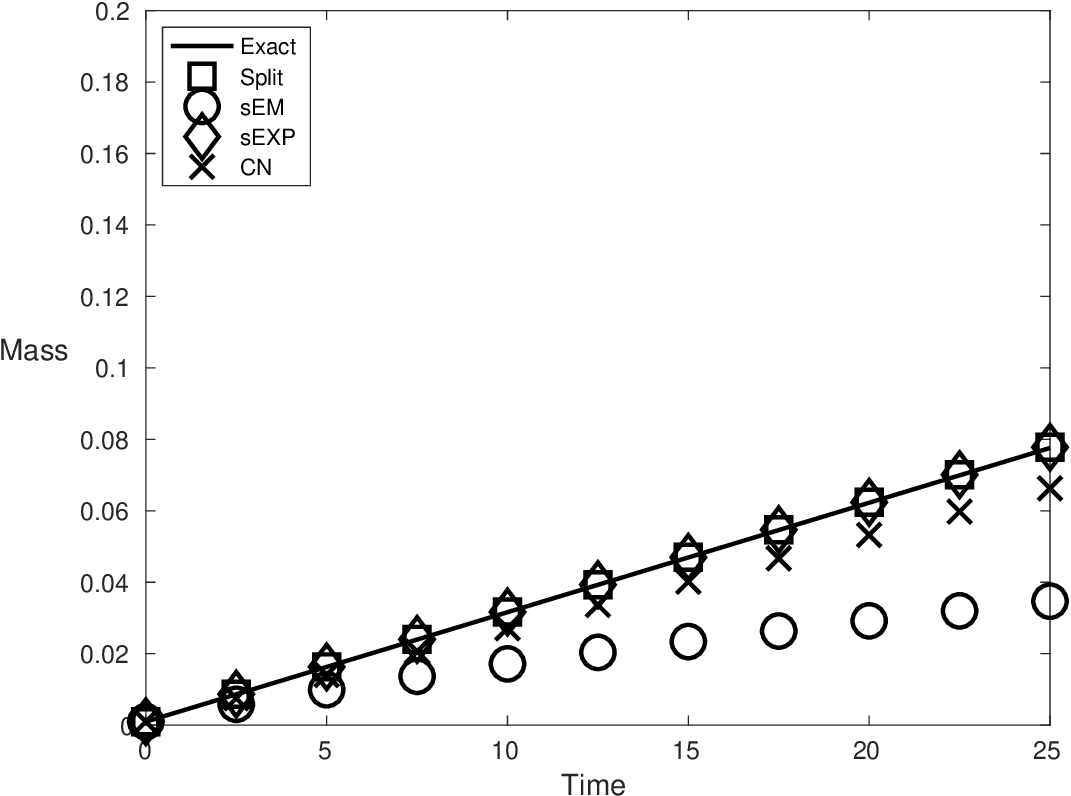}
  \caption{Cubic case}
\end{subfigure}
\caption{Trace formulas for mass of the splitting scheme (Split),
the Euler--Maruyama scheme (EM), the semi-implicit
Euler--Maruyama scheme (sEM), the exponential integrator (sEXP),
and the Crank--Nicolson (CN) schemes.}
\label{fig:mass}
\end{figure}

\subsection{Strong convergence}
In this subsection, we illustrate the strong convergence of the splitting scheme \eqref{S1}
as stated in Theorem~\ref{th:conv}.

To do this, we consider the stochastic Schr\"odinger
equation \eqref{sse} on the interval $[0,2\pi]$
with periodic boundary condition, and
a covariance operator with $\left(\gamma_k\right)_{k\in\mathbb{Z}}=\left(\frac1{1+k^2}\right)_{k\in\mathbb{Z}}$.
We consider the external potential $V(x)=\frac{3}{5-4\cos(x)}$ and nonlocal interaction
given by the potential $V(x)=\cos(x)$. We take the initial value $u_0=\frac{2}{2-\cos(x)}$ (external potential)
and $u_0=\frac{1}{1+\sin(x)^2}$ (nonlocal interaction). Additional parameters are:
coefficient in front of the noise $\alpha=1.5$,
time interval $[0,1]$, $250$ samples
used to approximate the expectations.
We use a pseudo-spectral method with $N_x=2^{10}$ modes and
the above time integrators. Strong errors, measured with $r=1$ at the end point,
are presented in Figure~\ref{fig:strong}. For this numerical experiment, the splitting and exponential integrators give
very close results. For clarity, only some of the values for the exponential integrator are displayed.
An order $1/2$ of convergence for the splitting scheme is observed. Note that, the strong order of convergence of
the other time integrators are not known in the case of the nonlocal interaction potential.
Observe that, in Figure~\ref{fig:strongb}, one sees that the order of convergence is less than $\frac12$,
however the exact value is not clearly visible. This may be due to numerical issues.
It may also happen that the order of convergence is not $\frac12$ due to the possible dependence of the order of convergence
with respect to the size of the noise and to the length of the time interval, see the conditions in Theorem~\ref{th:conv}.

\begin{figure}[h]
\begin{subfigure}{.5\textwidth}
  \centering
  \includegraphics[width=1.\linewidth]{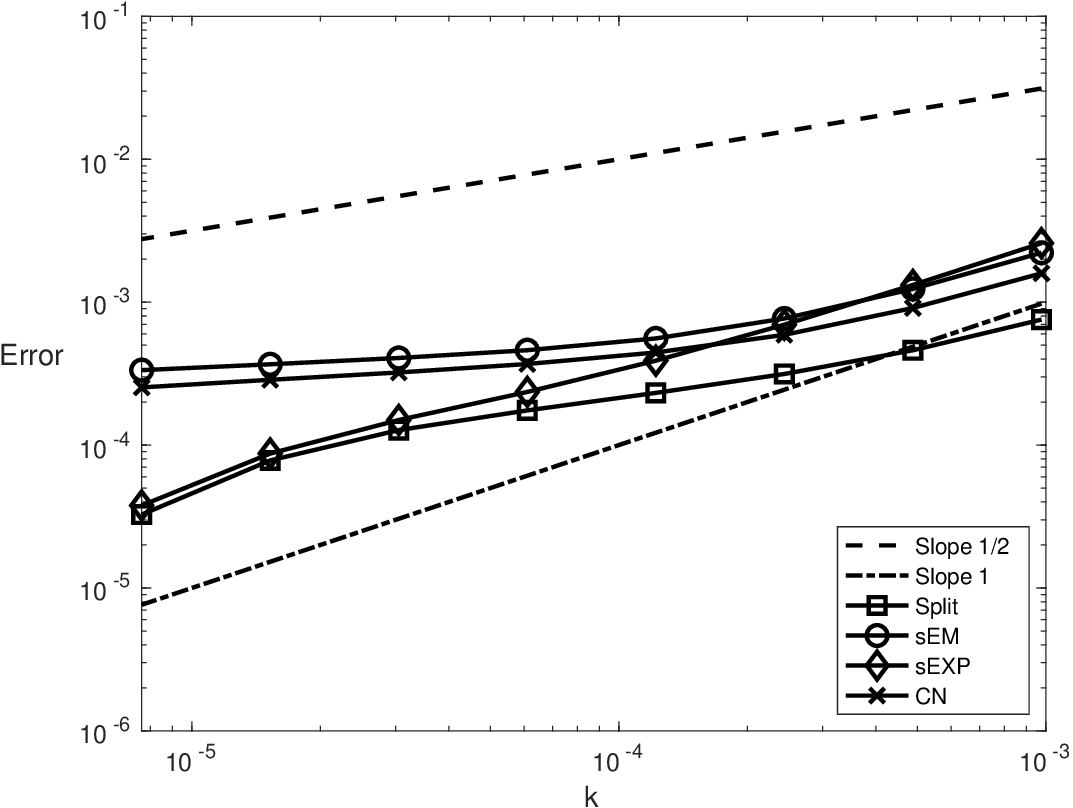}
  \caption{External potential}
\end{subfigure}%
\begin{subfigure}{.5\textwidth}
  \centering
  \includegraphics[width=1.\linewidth]{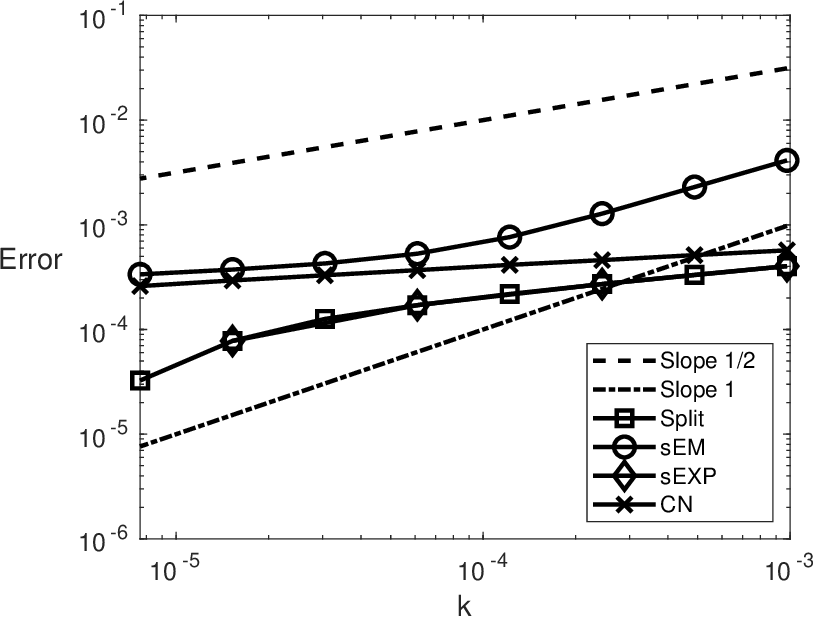}
  \caption{Nonlocal interaction}\label{fig:strongb}
\end{subfigure}
\caption{Strong errors for the stochastic Schr\"odinger equations.}
\label{fig:strong}
\end{figure}

In order to illustrate the higher order of convergence for the splitting scheme in the setting of Proposition~\ref{prop:conv1}, when $\sigma=2$,
we consider a smoother noise with covariance operator with
$\left(\gamma_k\right)_{k\in\mathbb{Z}}=\left(\frac1{1+k^4}\right)_{k\in\mathbb{Z}}$ (the other parameters for the simulation are as above).
% Results are presented in Figure~\ref{fig:strongS}, where a strong order of convergence $1$
% is observed for the proposed time integrator, in agreement with Theorem~\ref{th:conv}.
In Figure~\ref{fig:strongS}, one observes that order of convergence $1$ may be obtained: this is indicated in Theorem~\ref{th:conv},
in the case $\sigma=2$ (smoother noise), if appropriate conditions are satisfied.

\begin{figure}[h]
\begin{subfigure}{.5\textwidth}
  \centering
  \includegraphics[width=1.\linewidth]{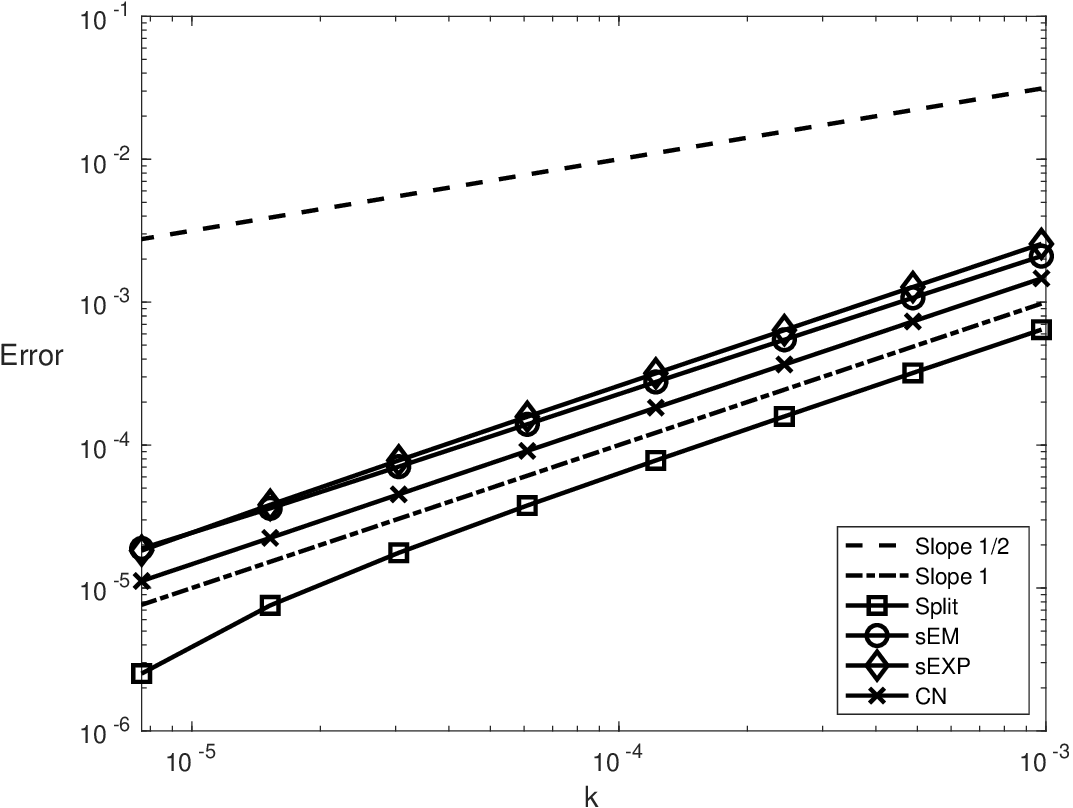}
  \caption{External potential}
\end{subfigure}%
\begin{subfigure}{.5\textwidth}
  \centering
  \includegraphics[width=1.\linewidth]{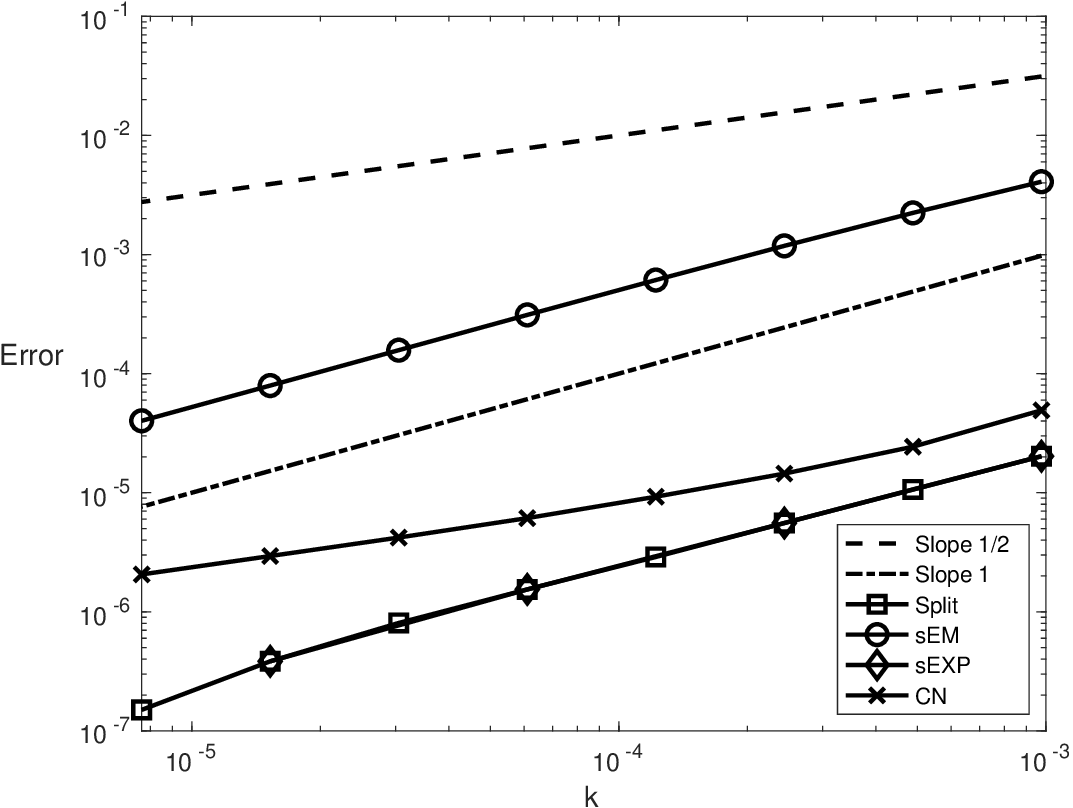}
  \caption{Nonlocal interaction}
\end{subfigure}
\caption{Strong errors for the stochastic Schr\"odinger equations with a smoother noise ($\sigma=2$).}
\label{fig:strongS}
\end{figure}

\subsection{Convergence in probability}
In this subsection we numerically demonstrate the order
of convergence in probability for the splitting scheme \eqref{S1}.
This order has been shown to be $1/2$ in Corollary~\ref{cor:pas} above.

Numerically, we investigate the order in probability by using the equation
\begin{equation}
\label{eq:probConv}
\max_{n\in\{1,2,\ldots,N\}} \norm{u_n  - u_{ref}(t_n)}_{L^2} \ge C \tau^\delta,
\end{equation}
where $u_{ref}$ denotes a reference solution computed using the splitting scheme with step-size $\tau_{ref}=2^{-16}$.
We then study the proportion of samples, $P$, fulfilling equation~\eqref{eq:probConv}
for given $C$ and $\delta$ and observe whether $P\to 0$ for the given $\delta$ as $\tau\to 0$ and $C$ increases.

We simulate $50$ samples of the splitting scheme applied to the SPDE \eqref{sse} with the initial value $u_0=\frac{2}{2-\cos(x)}$,
the nonlocal interaction and the same noise as in the previous subsection (non-smooth case).
In addition, we take the following parameters: $t\in[0,1]$, $N_x=2^8$ Fourier modes
and $\tau=2^n$ where $n=-6,-7,\ldots,-14$.
We then estimate the proportion $P$ of samples fulfilling \eqref{eq:probConv}
for each given $\tau$, $\delta= 0.4, 0.5, 0.6$, and $C=10^c$ for $c=1,2,3$.
The results are presented in Figure~\ref{fig:convProb}.

\begin{figure}[h!]
	\begin{center}
		\includegraphics*[width =.6\textwidth,keepaspectratio]{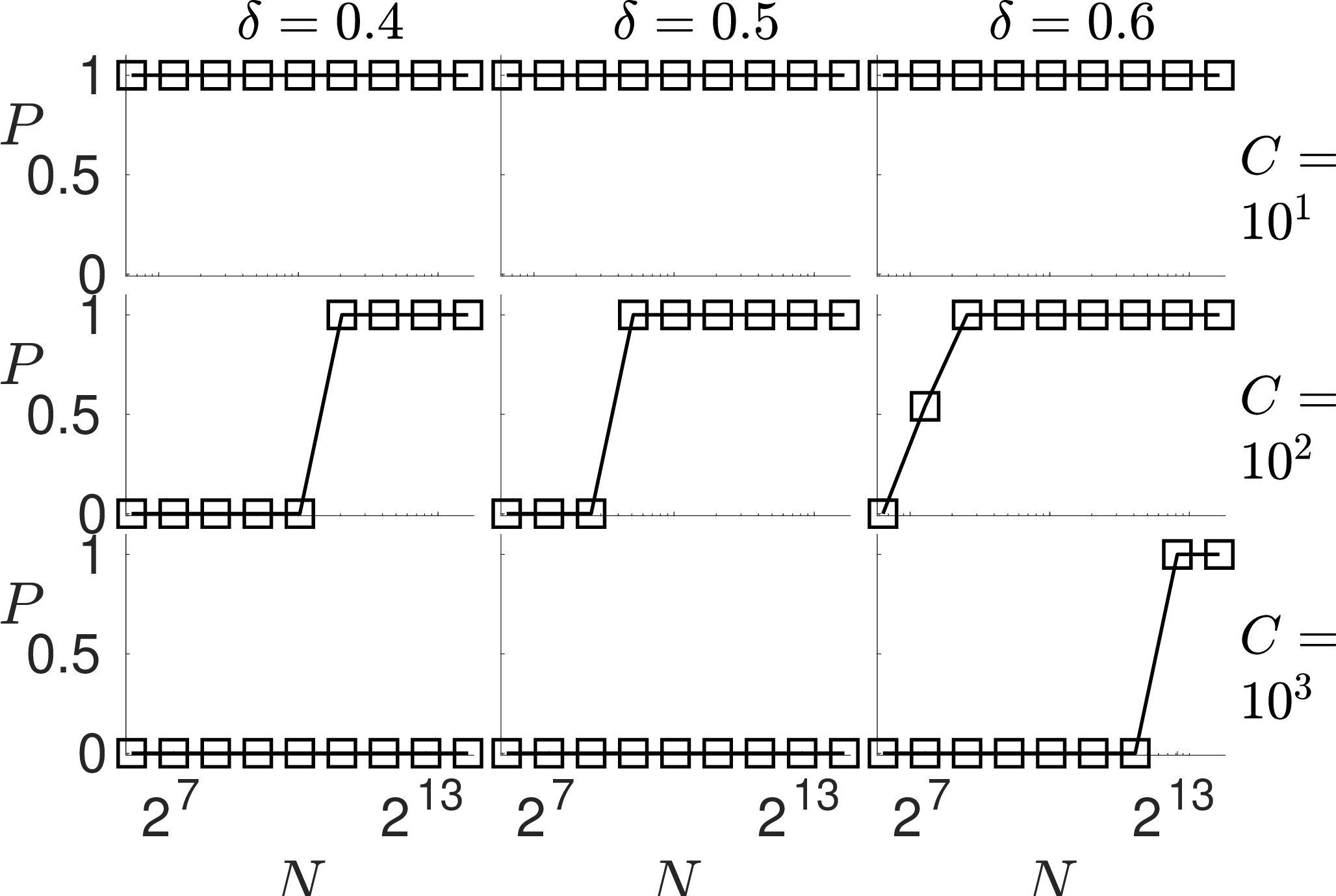}
		\caption{Proportion of samples fulfilling \eqref{eq:probConv} for
		         the splitting scheme ($N$ denotes the number of step-sizes).
			\label{fig:convProb}
		}
	\end{center}
\end{figure}

In this figure, one sees how the proportion of samples $P$
quickly goes to zero for $\delta \le 1/2$ and an increasing $C$.
Furthermore, this property does not hold for $\delta > 1/2$.
This numerical experiment thus confirms that the order
of convergence in probability of the splitting scheme is $1/2$,
as stated in Corollary~\ref{cor:pas}.

\subsection{Computational times}
In this numerical experiment, we compare the computational costs of the above time integrators
(expect the classical Euler--Maruyama scheme).
To do this, we consider the SPDE \eqref{sse} with the above nonlocal interaction potential
for times $t\in [0,2]$.
We discretize this SPDE using $N_x=2^{10}$ Fourier modes in space.
We run $100$ samples for each numerical scheme. For each scheme and each sample,
we run several time steps and compare the $L^2$ error
at the final time with a reference solution provided for the same sample by the same scheme
for a very small time-step $\tau=2^{-13}$.
Figure~\ref{fig:compcos} displays the total computational time for all the samples,
for each numerical scheme and each time-step, as a function of the averaged final error.
One observes better performance for the splitting scheme.

\begin{figure}[h]
\centering
\includegraphics*[height=5cm,keepaspectratio]{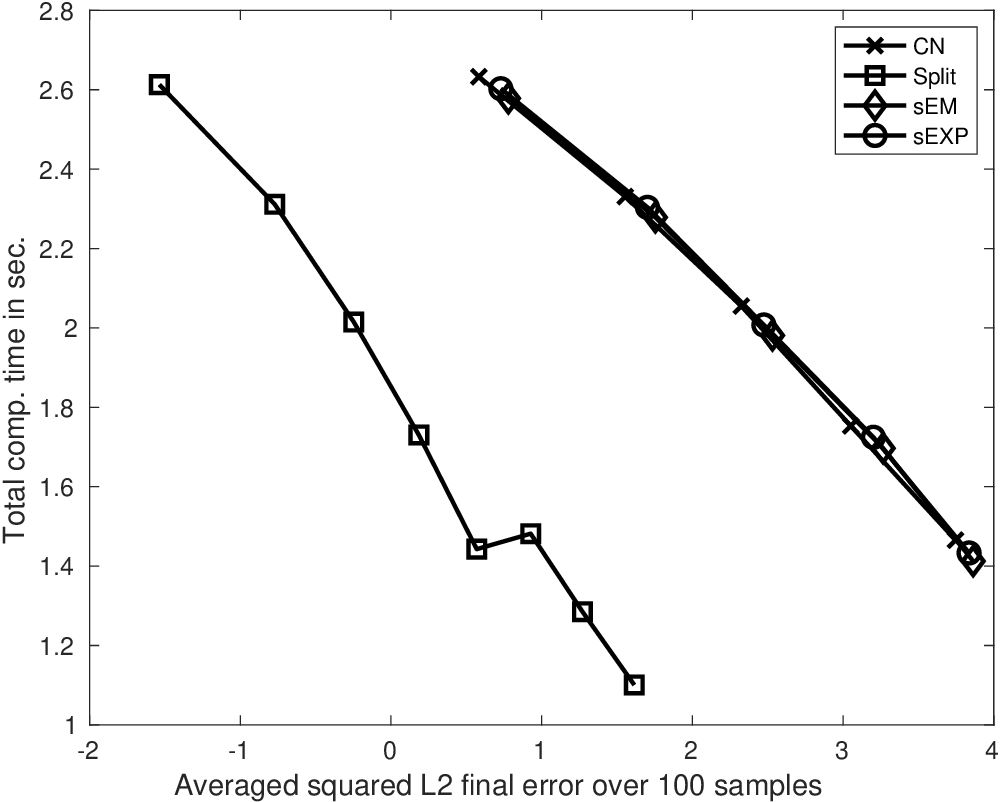}
\caption{Computational time as a function of the averaged final error for the four numerical methods.}
\label{fig:compcos}
\end{figure}

\appendix
\section*{Proof of Proposition~\ref{prop:exact}}\label{ap1}
This appendix provides the proofs of properties of the exact solution to \eqref{sse}.

\textbf{Global well-posedness}.
Let Assumption~\ref{as1} be satisfied. Since the nonlinearity $F$ is only locally Lipschitz continuous,
a truncation argument is used to prove global well-posedness.
Let us stress that the key property of~\eqref{sse} used in the argument below is the fact that $V[u]$ is real-valued.

Let $\theta\colon[0,\infty)\to [0,1]$ be a compactly supported Lipschitz continuous function,
such that $\theta(x)=1$ for $x\in[0,1]$. For any $R\in(0,\infty)$, set $V^R(u)=\theta(R^{-1}\norm{u}_{L^2})V[u]$
and $F^R(u)=V^R(u)u$. The mapping $F^R$ is globally Lipschitz continuous, and the SPDE
\[
\ii\mathrm{d}u^R(t)  = \Delta u^R(t)\, \mathrm{d}t + F^R(u^R(t))\,\mathrm{d}t+\,\alpha\mathrm{d}W^Q(t),
\]
with initial condition $u^R(0)=u_0$, thus admits a unique global solution denoted by $\bigl(u^R(t)\bigr)_{t\in[0,T]}$. Since the mapping $V^R$ is real-valued, the trace formula holds, see~\eqref{eq:trace}: indeed, one obtains
\[
\dd\norm{u^R(t)}_{L^2}^2=\alpha^2{\rm Tr}(Q)+2\alpha {\rm Im}(\langle u^R(t),\dd W^Q(t)\rangle).
\]
Taking expectation, one obtains the trace formula
$$
\E\left[\norm{u^R(t)}_{L^2}^2\right]=\norm{u_0}_{L^2}^2+2t\alpha^2{\rm Tr}(Q),
$$ where the right-hand side does not depend on truncation index $R$. Using the Burkholder--Davis--Gundy inequality, one obtains
\begin{align*}
\E[\underset{0\le t\le T}\sup~\norm{u^R(t)}_{L^2}^2]&\le 3\left(\norm{u_0}_{L^2}^2+\alpha^2T{\rm Tr}(Q)\right)\\
&\quad+3\alpha^2\int_0^T \sum_{k\in\N}|\gamma_k|^2\E[|\langle u^R(s),e_k\rangle|^2]\,\mathrm{d}s\\
&\le 3\left(\norm{u_0}_{L^2}^2+\alpha^2T{\rm Tr}(Q)\right)\\
&\quad+3\alpha^2\norm{Q^{\frac12}}_{\mathcal{L}_2^0}^2\int_0^T \E[\norm{u^R(s)}_{L^2}^2]\,\mathrm{d}s\\
&\le C(T,Q,u_0),
\end{align*}
where one observes that $C(T,Q,u_0)$ does not depend on $R$,
using the trace formula above for the term in the integral $\norm{u^R(s)}_{L^2}^2$.

Setting the truncation argument is then straightforward. Let $\tau^R=\inf\{t\ge 0; \norm{u^R(t)}_{L^2}>R\}$. If $R_1,R_2\ge R$, then $u^{R_1}(t)=u^{R_2}(t)$ for all $t\le \tau^R$, by construction of $F^R$. This allows us to define $u(t)$ solving~\eqref{sse} for all $t\in[0,\tau)$, where $\tau=\underset{R\to \infty}\lim~\tau^R$. Finally, $\tau=\infty$ almost surely, indeed for every $T\in(0,\infty)$, one has
\begin{align*}
\mathbb{P}(\tau\le T)&=\underset{R\to \infty}\lim~\mathbb{P}(\tau^R\le T)=\underset{R\to \infty}\lim~\mathbb{P}(\underset{0\le t\le T}\sup~\norm{u^R(t)}_{L^2}^2\ge R^2)\\
& \le \underset{R\to\infty}\lim~\frac{C(T,Q,u_0)}{R^2}=0,
\end{align*}
using the moment estimate above. This concludes the proof of the global well-posedness of~\eqref{sse}.

\textbf{Moment estimates in $H^1$}.
Next, let us prove the moment bounds for the exact solution to \eqref{sse}.
We provide details only for the moment estimates in the $H^1$ norm (under Assumption~\ref{as1})
$$
\underset{0\le t\le T}\sup~\E[\norm{\nabla u(t)}_{L^2}^{2p}]\le C_p(T,Q,u_0).
$$
Indeed, the moment estimates for the $L^2$ norm, namely
$$
\underset{0\le t\le T}\sup~\E[\norm{u(t)}_{L^2}^{2p}]\le C_p(T,Q,u_0),
$$
can either be obtained using similar arguments, or be deduced from the exponential moment estimates for which a detailed proof is provided above. Likewise, the proof of moment estimates
$$
\underset{0\le t\le T}\sup~\E[\norm{\nabla u(t)}_{H^2}^{2p}]\le C_p(T,Q,u_0)
$$
under Assumption~\ref{as2} would follow from similar arguments.

Let us first consider $\psi(u)=\norm{\nabla u}_{L^2}^{2}$ for all $u\in H^1$. Its first and second order derivatives are given by
\begin{align*}
\psi'(u).h&=2{\rm Re}(\langle \nabla u,\nabla h\rangle)\\
\psi''(u).(h,k)&=2{\rm Re}(\langle \nabla h,\nabla k\rangle)
\end{align*}
for $h,k\in H^1$. Using It\^o's formula, one gets
\begin{align*}
\mathrm{d}\norm{\nabla u(t)}_{L^2}^{2}&=\mathrm{d}\psi(u(t))\\
&=\psi'(u(t)).\mathrm{d}u(t)+\frac{\alpha^2}{2}\sum_{k\in\N}\psi''(u(t)).(\gamma_ke_k,\gamma_ke_k)\,\mathrm{d}t\\
&=2{\rm Im}(\langle\nabla\overline{u}(t),\nabla\Delta u(t)\rangle)\,\mathrm{d}t
+2{\rm Im}(\langle \nabla \overline{u}(t),\nabla F(u(t))\rangle)\,\mathrm{d}t\\
&~+2\alpha{\rm Im}(\langle \nabla u(t),\nabla \mathrm{d}W^Q(t)\rangle)~+\alpha^2\sum_{k\in\N}|\gamma_k|^2\norm{\nabla e_k}_{L^2}^2.
\end{align*}
The first term in the last equality vanishes, and when taking expectation the third term also vanishes. Using the condition~\eqref{eq:as1-2} to deal with the second term, one obtains
\[
\frac{\mathrm{d} \E[\norm{\nabla u(t)}_{L^2}^2]}{\mathrm{d}t}\le
C\left(1+\E[\norm{\nabla u(t)}_{L^2}^2]+\E[P_1(\norm{u(t)}_{L^2}^2)]\right),
\]
where $P_1$ is a polynomial mapping, see equation~\eqref{eq:as1-2}. Note that one has the upper bound $\underset{0\le t\le T}\sup~\E[P_1(\norm{u(t)}_{L^2}^2)]\le C(T,Q,u_0)$ due to moment bounds in the $L^2$ norm. Using the Gronwall Lemma then yields
\[
\underset{0\le t\le T}\sup~\E[\norm{\nabla u(t)}_{L^2}^2]\le C(T,Q,u_0).
\]
Let $p\ge 1$, then applying It\^o's formula for $\psi_p(u)=\psi(u)^p$ yields
\begin{align*}
\frac{\mathrm{d}\E[\norm{\nabla u(t)}_{L^2}^{2p}]}{\mathrm{d}t}&\le
C_p\norm{\nabla u(t)}_{L^2}^{2(p-1)}{\rm Im}(\langle \nabla u(t),\nabla F(u(t))\rangle)\\
&+C_p\alpha^2\sum_{k\in\N}|\gamma_k|^2\norm{\nabla e_k}_{L^2}^2\E[\norm{\nabla u(t)}_{L^2}^{2(p-1)}]\\
&\le C\left(1+\E[\norm{\nabla u(t)}_{L^2}^{2p}]+C\E[P_1(\norm{u(t)}_{L^2}^{2})^{p}]\right)
\end{align*}
using~\eqref{eq:as1-2} and Young's inequality. Using Gronwall's lemma then concludes the proof of the moment bounds in the $H^1$ norm.

\textbf{Temporal regularity}.
It remains to deal with the temporal regularity estimate. Using the mild formulation~\eqref{mild}, for any $0\leq t_1<t_2\leq T$, one has
\begin{align*}
u(t_2)-u(t_1)&=S(t_1)\bigl(S(t_2-t_1)-I\bigr)u_0\\
&~-\ii\int_0^{t_1}S(t_1-s)\bigl(S(t_2-t_1)-I\bigr)F(u(s))\,\mathrm{d}s\\
&~-\ii\int_{t_1}^{t_2}S(t_2-s)F(u(s))\,\mathrm{d}s\\
&~-\ii\alpha\int_{0}^{t_1}S(t_1-s)\bigl(S(t_2-t_1)-I\bigr)\,\mathrm{d}W^Q(s)\\
&~-\ii\alpha\int_{t_1}^{t_2}S(t_2-s)\,\mathrm{d}W^Q(s).
\end{align*}
Using Lemma~\ref{lemma1}, the first estimate of~\eqref{eq:as1-2} and the moment bounds in the $L^2$ and $H^1$ norms, one obtains
\begin{gather*}
\norm{S(t_1)\bigl(S(t_2-t_1)-I\bigr)u_0}_{L^2}\le C|t_2-t_1|^\frac12 \norm{u_0}_{H^1}\\
\E\left[\norm{\int_0^{t_1}S(t_1-s)\bigl(S(t_2-t_1)-I\bigr)F(u(s))\,\mathrm{d}s}_{L^2}^{2p}\right]\\
\le T^{2p-1}|t_2-t_1|^{p}\int_{0}^{T}\E\left[\norm{F(u(s))}_{H^1}^{2p}\right]\,\mathrm{d}s
\le C|t_2-t_1|^{\frac{2p}{2}}\\
\E\left[\norm{\int_{t_1}^{t_2}S(t_2-s)F(u(s))\,\mathrm{d}s}_{L^2}^{2p}\right]\le
|t_2-t_1|^{2p}\int_0^T\E\left[\norm{F(u(s))}_{L^2}^{2p}\right]\,\mathrm{d}s\\
\le C|t_2-t_1|^{2p}.
\end{gather*}
Using It\^o's isometry formula and Lemma~\ref{lemma1}, one has
\begin{gather*}
\E\left[\norm{\int_{0}^{t_1}S(t_1-s)\bigl(S(t_2-t_1)-I\bigr)\,\mathrm{d}W^Q(s)}_{L^2}^2\right]=t_1\norm{\bigl(S(t_2-t_1)-I\bigr)Q^{\frac12}}_{\mathcal{L}_2^0}^2\\
\le Ct_1|t_2-t_1| \norm{Q^{\frac12}}_{\mathcal{L}_2^1}^2\\
\E\left[\norm{\int_{t_1}^{t_2}S(t_2-s)\,\mathrm{d}W^Q(s)}^2\right]=|t_2-t_1|\norm{Q^\frac12}_{\mathcal{L}_2^{0}}.
\end{gather*}
Since the stochastic integrals have Gaussian distribution, gathering the estimates above yields
$$
\E\left[\norm{u(t_2)-u(t_1)}_{L^2}^{2p}\right]\le C_p(T,Q,u_0)|t_2-t_1|^p,
$$
for all $p\ge 1$ and $t_1,t_2\in[0,T]$. This concludes the proof of Proposition~\ref{prop:exact}.

\section*{Acknowledgements}
%\coco{We thank the referee for the comments on the initial version of the article.}
% We appreciate the referees' comments on an earlier version of the paper.
We thank Andr\'e Berg (Ume{\aa} University) for discussions on
the implementation of Figure~\ref{fig:convProb}.
The work of CEB was partially supported by the SIMALIN project ANR-19-CE40-0016 of the French National Research Agency.
The work of DC was partially supported by the Swedish Research Council (VR) (projects nr. 2018-04443).
Part of the work of CEB was carried out when working at Institut Camille Jordan, CNRS and Universit\'e Lyon 1.
Part of the work of DC was carried out when working for the Department of Mathematics and Mathematical Statistics at Ume{\aa} University.
The computations were performed on resources provided by the Swedish National Infrastructure
for Computing (SNIC) at HPC2N, Ume{\aa} University and at
Chalmers Centre for Computational Science and Engineering.

%% If you have bibdatabase file and want bibtex to generate the
%% bibitems, please use
%%
\bibliographystyle{plain}
\bibliography{labib}

%% else use the following coding to input the bibitems directly in the
%% TeX file.

% \begin{thebibliography}{00}
%
% %% \bibitem{label}
% %% Text of bibliographic item
%
% \bibitem{}
%
% \end{thebibliography}
\end{document}